\title{A Universal Axiomatization of Metropolis-Rota Implication 
Algebras}
\author{Colin G. Bailey}
\address{School of Mathematics,  Statistics and Operations Research\\
Victoria University of Wellington\\
Wellington, New Zealand\\
}
\email{Colin.Bailey@vuw.ac.nz}
\author{Joseph S. Oliveira}
\address{
Pacific Northwest National Laboratories\\
Richland\\
U.S.A.}
\email{Joseph.Oliveira@pnl.gov}
\subjclass{06A06, 06E99}
\keywords{cubes, Boolean algebras, implication algebras}
\let\rsf\mathscr
\def\caret{\mathbin{\hat{\hphantom{m}}}}
\def\dblCaret{\mathbin{\hbox{$\hat{\hphantom{m}}$}\kern-10pt\hbox{$\hat{\hphantom{m}}$}}}
\def\eqcl[#1]{\pmb{[}#1\pmb{]}}
\def\one{\mathbf1}
\providecommand{\meet}{\mathbin{\wedge}}
\providecommand{\join}{\mathbin{\vee}}
\newcommand{\comp}[1]{\overline{#1}}
     \def\restrict{\hbox{\rm\kern0.166em\accent"12\kern-0.536em$\vert$\kern0.3em}}%
     \def\restrict{\upharpoonright}%
\def\twoSet#1#2{\left\{%
\vphantom{#2}#1\thinspace\right|\nolinebreak[3]\left.%
  #2%
  \vphantom{#1}%
  \right\}%
}
\def\oneSet#1{\left\lbrace#1\right\rbrace}
\newif\if@nstr
\def\setstrfalse{\let\if@nstr=\iffalse}
\def\setstrtrue{\let\if@nstr=\iftrue}
\def\@nstr #1#2{
\def\@@nstr ##1#1##2##3\@@nstr{\ifx
\@nstr ##2\setstrfalse \else \setstrtrue \fi }
\@@nstr #2#1\@nstr \@@nstr}
\def\@separate#1|#2@{\setFront{#1}\setBack{#2}}
\def\lb#1\rb{\@nstr|{#1} \if@nstr \@separate#1 @ \twoSet{\@setFront}{\@setBack}%
\else \@separate |{#1 }@ \oneSet{\@setBack}\fi%
}
\def\setFront#1{\def\@setFront{#1}}
\def\setBack#1{\def\@setBack{#1}}
\def\Set#1{\lb{#1}\rb}
\def\oneBrk#1{\left\langle#1\right\rangle}
\def\twoBrk#1#2{\left\langle%
\vphantom{#2}#1\thinspace\right|\nolinebreak[3]\left.%
  #2%
  \vphantom{#1}%
  \right\rangle%
}
\def\brk<#1>{\@nstr|{#1} \if@nstr \@separate#1 @ \twoBrk{\@setFront}{\@setBack}%
\else \@separate |{#1 }@ \oneBrk{\@setBack}\fi%
}
\def\thmref#1{\normalfont{theorem}~\ref{#1}}
\def\lemref#1{\normalfont{lemma}~\ref{#1}}
\theoremstyle{plain}
\newtheorem{thm}{Theorem}[section]
\newtheorem{lem}[thm]{Lemma}
\newtheorem{cor}[thm]{Corollary}
\newtheorem{prop}[thm]{Proposition}
\newtheorem{defn}[thm]{Definition}
\theoremstyle{remark}
{}
{\newtheorem{example}{Example}[section]}
{}
{}
\begin{document}

\begin{abstract}
	We show that the class of Metropolis-Rota implication 
	algebras can be given a universal axiomatization using an 
	operation closely related to composition in oriented matroids. 
	Lastly we describe the role of our new operation in the 
	collapse of an MR-algebra. 
\end{abstract}
\maketitle

\section{Introduction}
Metropolis-Rota implication algebras (MR-algebras) were first seen in 
\cite{MR:cubes} as an algebraic representation of the poset of faces of 
an $n$-cube. Therein was introduced the partial reflection operator $\Delta$ 
that was used in conjunction with the lattice structure to 
characterize these face lattices -- in the finite case. They proved 
the following theorem.
\begin{thm}\label{thm:MR}
Let $\mathcal L$ be a finite lattice with minimum $0$ and maximum $1$. For 
every $x\not=0$, let $\Delta_x$ be a function defined on the segment 
$[0, x]$ and taking values in $[0, x]$. Assume 
\begin{enumerate}[(i)]
	\item  If $a\le b\le x$ then $\Delta_x(a)\le \Delta_x(b)$;
	
	\item  $\Delta_x^2=id$ (the identity map);
	
	\item  Let $a<x$ and $b<x$. Then the following two conditions are 
	equivalent:
	\begin{gather*}
		\Delta_x(a)\join b<x\text{ and }a\meet b=0. 			
	\end{gather*}
\end{enumerate}
Then $\mathcal L$ is isomorphic to the lattice of faces of an $n$-cube, for 
some $n$. 

Conversely, if $\mathcal L$ is the lattice of faces of an $n$-cube, and 
$\Delta_x(y) $ is the antipodal face of $y$ within the face $x$, then 
$\mathcal L$ satisfies conditions (i) through (iii). 
\end{thm}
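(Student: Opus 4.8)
The plan is to treat the two directions separately, doing the easy converse first to fix a working model.

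\emph{The converse.} I would realize the faces of the $n$-cube $[-1,1]^n$ as sign vectors in $\{-,0,+\}^n$, a $0$ marking a free (spanned) coordinate and a $\pm$ a fixed one, with the empty face adjoined as $0$ and the all-zero vector as $1$. Here $f\le g$ means $g_i=0$ or $f_i=g_i$ for every $i$; $(f\meet g)_i$ fixes a coordinate as soon as either of $f,g$ does (and the meet is $0$ as soon as $f,g$ disagree on a common support coordinate), while $(f\join g)_i=f_i$ exactly when $f_i=g_i\ne0$ and is $0$ otherwise. If $x$ has free set $F=\{i:x_i=0\}$ then $\Delta_x$ flips the signs on $F$ and fixes the rest. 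With this description (i) and (ii) are immediate (a sign flip is an order automorphism of the subcube and is its own inverse), and (iii) reduces to the observation that, for $a,b<x$, both ``$a\meet b=0$'' and ``$\Delta_x(a)\join b<x$'' say exactly $\exists\,i\in F:a_i=-b_i\ne0$; a short case check on $F$ finishes it.

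\emph{The forward direction.} First I would record the soft consequences of the axioms: since $\Delta_x$ is a monotone involution it is an order automorphism of $[0,x]$ fixing $0$ and $x$, and putting $b=\Delta_x(a)$ in (iii) gives $a\meet\Delta_x(a)=0$ for every $a<x$. In particular each coatom $c$ satisfies $c\meet\Delta_1(c)=0$, so $\comp{c}:=\Delta_1(c)\ne c$ and $\Delta_1$ is a fixed-point-free involution of the set of coatoms; let $n$ be the number of such pairs. The argument then proceeds by induction on $|\mathcal L|$. Because (i)--(iii) refer only to the intervals $[0,y]$, they are inherited by every $[0,c]$ with $c$ a coatom; so by induction $[0,c]$ is the face lattice of an $(n-1)$-cube, and the restriction of $\Delta_1$ is an order isomorphism $[0,c]\cong[0,\comp{c}]$.

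The heart is to reassemble $\mathcal L$ as the prism over $[0,c]$. The engine is the following reading of (iii): for a coatom $c$ and $z<1$, since $[c,1]=\{c,1\}$ we have $z\meet c=0\iff\Delta_1(z)\join c<1\iff\Delta_1(z)\le c$. Hence if $z\ne0$ then $z\meet c$ and $z\meet\comp{c}$ cannot both vanish, for otherwise $\Delta_1(z)\le c\meet\comp{c}=0$ forces $z=0$. Using the ``shadow'' isomorphism $\sigma=\Delta_1\circ\Delta_c\colon[0,c]\to[0,\comp{c}]$ I would send each $z\ne0$ to the pair $(u,v)=(z\meet c,\,z\meet\comp{c})$ and show the three alternatives $v=0$, $u=0$, or $u,v\ne0$ with $\sigma(u)=v$ and $z=u\join v$ are exhaustive and disjoint. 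This produces a bijection of $\mathcal L$ onto the $\{-,0,+\}$-labelled faces of the $(n-1)$-cube, which are precisely the faces of the $n$-cube, and monotonicity of $\Delta$ together with repeated appeals to (iii) would check that meets and joins are preserved in both directions.

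The main obstacle is exactly this gluing: one must show that (iii) rigidly matches the two antipodal facets, so that every $z$ that is ``free in the new coordinate'' is forced to equal $u\join\sigma(u)$ for a unique $u\in[0,c]$, and that the labelled-face order is reproduced on the nose. Everything else (the order-automorphism facts, the coatom pairing, and the base cases $n=0,1$, where $\mathcal L$ is a $2$-chain and a diamond) is routine; the delicate point is that condition (iii) by itself supplies enough rigidity to pin down the interaction between $[0,c]$ and $[0,\comp{c}]$, and hence the full cube structure.
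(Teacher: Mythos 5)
This theorem is never proved in the paper: it is quoted as background from Metropolis and Rota \cite{MR:cubes}, so there is no in-paper argument to measure you against, and your proposal has to stand on its own as a proof. Measured that way, your converse direction is complete and correct: the sign-vector model of the faces, the descriptions of $\meet$, $\join$ and $\Delta_x$, and the observation that both conditions in (iii) say ``$a$ and $b$ disagree at some coordinate free in $x$'' are a genuine, if routine, verification. Your soft preparation for the forward direction is also sound: $\Delta_x$ is an order automorphism of $[0,x]$; putting $b=\Delta_x(a)$ in (iii) gives $a\meet\Delta_x(a)=0$; the coatoms are paired by $\Delta_1$; axioms (i)--(iii) restrict to $[0,c]$; and the equivalence $z\meet c=0\iff\Delta_1(z)\le c\iff z\le\comp{c}$ (for $z<1$) is a correct use of (iii) together with the fact that $c$ is a coatom.

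The problem is that the forward direction stops exactly where the theorem starts. You never prove any of the three claims that carry all the content: (a) that a transverse $z$ (one with $u=z\meet c\ne0$ and $v=z\meet\comp{c}\ne0$) satisfies $z=u\join v$; (b) that $v$ is determined by $u$ through a single isomorphism $\sigma$ that does not depend on $z$; (c) that every nonzero $u\le c$ occurs as $z\meet c$ for a unique transverse $z$. Writing that ``monotonicity of $\Delta$ together with repeated appeals to (iii) would check'' these, and conceding that ``the delicate point is that condition (iii) by itself supplies enough rigidity,'' is an acknowledgement that the proof is missing, not a proof. These steps are provable but need ideas absent from your outline. For (a): from (iii) with $a=b=u$ one gets $u\join\Delta_z(u)=z$ for $0\ne u<z$, and with $b=\Delta_z(u)$ one gets $u\meet\Delta_z(u)=0$; since $\Delta_z(u)\le z$ this forces $\Delta_z(u)\meet c=\Delta_z(u)\meet u=0$, hence $\Delta_z(u)\le\comp{c}$ by your ``engine,'' and the symmetric inequality plus monotonicity of $\Delta_z$ yields $v=\Delta_z(u)$ and $z=u\join v$. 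For (b) and (c) one must exploit the induction hypothesis far more heavily than you do -- for instance, that the facet lattices $[0,c]$ and $[0,\comp{c}]$ are atomistic cube face lattices on which a family satisfying (i)--(iii) is unique (forced on atoms by $p\join\Delta_1(p)=1$), and then an atom-by-atom comparison, using $q\le z\iff\Delta_1(q)\join z=1$ and the coatom property of $c$, to pin $\Delta_z(z\meet c)$ against $\Delta_1\Delta_c(z\meet c)$, plus a separate argument for surjectivity/uniqueness of the transverse lift. None of that is bookkeeping, and without it what you have is a strategy with a correctly identified hole in the middle, not a proof of the Metropolis--Rota theorem.
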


One can take an alternative approach to extending the Metropolis-Rota 
theorem by noting that the face lattice of the $n$-cube is exactly the 
lattice of closed intervals of the Boolean algebra $\bold 2^n$, and the 
$\Delta$ operator is induced by local complementation. One can then ask 
for an algebraic characterization of such lattices, which is provided 
by \cite{BO:eq},  wherein it was  shown that 
such lattices are essentially characterized by the Metropolis-Rota axioms plus 
atomicity and the fact that every interval $[x, y]$,  where $x>0$,  is a Boolean algebra. 

In studying such lattices one quickly discovers that $0$ plays a rather 
ambiguous role. It makes the structure into a lattice, but it prevents 
the class of such structures from being closed under cartesian products. 
It also makes the natural homomorphisms trivial (they are forced to be 
embeddings). 

The next obvious step then is to eliminate zero. The resulting 
structures are upper semi-lattices, and (as is clear from the 
Bailey-Oliveira theorem) they have a natural  implication structure. 
(This was first pointed out and studied  by Oliveira -- see 
\cite{Joe:thesis}.) Furthermore the class of such structures is now closed under 
cartesian products and  homomorphic images. 
We call the elements of this class {\em MR-algebras}. 

There are many ways to make this class into a variety--close under 
subobjects; add a new constant naming an atom; or add new operations. 

The second option is 
unsatisfactory, as the resulting category is naturally isomorphic to the 
category of Boolean algebras. The first 
option gives rise to \emph{cubic implication algebras} which are studied in 
detail in \cite{BO:eq}. A perhaps unfortunate side effect of the 
generality of cubic algebras is that there are many finite cubic 
algebras that are not face lattices of cubes,  but only embed into 
such lattices. In this paper we look at the last option,  
providing a new operation,  $\caret$, that simultaneously generalizes $\Delta$ 
and provides certain important lower bounds -- and so partly encodes 
the MR-axiom. From this we get a univeral axiomatization.
That this encoding is `successful' is illustrated by 
two things 
\begin{enumerate}[-]
	\item  all algebras with this new operation satisfying the axioms 
	below are exactly the cubic algebras satisfying the MR axiom;

	\item  and hence the finite ones are exactly the face lattices of 
	$n$-cubes.
\end{enumerate}

This new operation can be considered a variation on the 
well-known operation of composition in oriented matroids.

The paper begins with background information on cubic and MR-algebras 
as developed in \cite{BO:eq},  defines $\caret$ and shows that it 
captures the MR-axiom. We then give purely universal axioms for 
MR-algebras and show that all such algebras are cubic algebras with 
caret,  and hence are MR-algebras. 

The latter part of the paper develops the notion of \emph{collapse} of an 
MR-algebra and shows that caret is closely related to meet in an 
associated implication lattice. 

\section{Cubic \& MR algebras and the new operation}
First we  recall some definitions. 
\begin{defn}
	A \emph{cubic algebra} is a join semi-lattice with one and a binary 
	operation $\Delta$ satisfying the following axioms:
	\begin{enumerate}[a.]
		\item  $x\le y$ implies $\Delta(y, x)\join x = y$;
		
		\item  $x\le y\le z$ implies $\Delta(z, \Delta(y, x))=\Delta(\Delta(z, 
		y), \Delta(z, x))$;
		
		\item  $x\le y$ implies $\Delta(y, \Delta(y, x))=x$;
		
		\item  $x\le y\le z$ implies $\Delta(z, x)\le \Delta(z, y)$;
		
		\item[] Let $xy=\Delta(1, \Delta(x\join y, y))\join y$ for any $x$, $y$ 
		in $\mathcal L$. Then:
		
		\item  $(xy)y=x\join y$;
		
		\item  $x(yz)=y(xz)$;
	\end{enumerate}
\end{defn} 

\begin{defn}
	An \emph{MR-algebra} is a cubic algebra satisfying the MR-axiom:\\
	if $a, b<x$ then 
	\begin{gather*}
		\Delta(x, a)\join b<x\text{ iff }a\meet b\text{ does not exist.}
	\end{gather*}
\end{defn}

\begin{example}
    Let $X$ be any set,  and 
    $$
    \rsf S(X)=\Set{\brk<A, B> | A, B\subseteq X\text{ and }A\cap 
    B=\emptyset}.
    $$
    Elements of $\rsf S(X)$ are called \emph{signed subsets} of $X$.
    The operations are defined by 
    \begin{align*}
	1&=\brk<\emptyset,  \emptyset>\\
	\brk<A, B>\join\brk<C, D>&=\brk<A\cap C,  B\cap D>\\
	\Delta(\brk<A, B>, \brk<C, D>)&=\brk<A\cup D\setminus B, 
	B\cup C\setminus A>.
\end{align*}
These are all atomic MR-algebras.
\end{example}

\begin{example}
    Let $B$ be a Boolean algebra, then the \emph{interval algebra} of $B$ is
$$
\rsf I(B)=\Set{[a, b] | a\le b \text{ in }B}
$$
ordered by inclusion. The operations are defined by
\begin{align*}
	1&=[0, 1]\\
	[a, b]\join[c, d]&=[a\meet c, b\join d]\\
	\Delta([a, b], [c, d])&=[a\join(b\meet\comp d), b\meet(a\join\comp c)].
\end{align*}
These are all atomic MR-algebras. For further details see \cite{BO:eq}.

We note that $\rsf S(X)$ is isomorphic to $\rsf I(\wp(X))$. 
\end{example}

\begin{example}
    Let $B$ be a Boolean algebra and $\rsf F$ be a filter in $B$. Then 
the \emph{filter algebra} is the set
$$
\rsf I(\rsf F)=\Set{[a, b] | a\le b\text{ and }\comp a, b\in\rsf F}
$$
with the same order and operations as for interval algebras.

It is easy to show that if $\rsf F$ is a principal filter then 
$\rsf I(\rsf F)$ is isomorphic to an interval 
algebra. 
Hence every finite filter gives only interval algebras
and by the Metropolis-Rota theorem every finite MR-algebra is an 
interval algebra. 
\end{example}

\begin{defn}\label{def:caret}
	Let $\mathcal L$ be a cubic algebra. Then for any $x, y\in\mathcal L$ 
	we define the (partial) operation $\caret$ (caret) by:
	$$
		x\caret y=x\meet\Delta(x\join y, y)
	$$
	whenever this meet exists. 
\end{defn}

\begin{lem}
	In an MR-algebra the caret operation is total.
\end{lem}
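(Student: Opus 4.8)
The plan is to reduce the existence of the meet to the MR-axiom. Write $z=x\join y$, so that $x\caret y=x\meet\Delta(z,y)$, where $x\le z$ and $\Delta(z,y)\le z$ (the latter because $\Delta(z,y)\join y=z$ by axiom a). If $x$ and $\Delta(z,y)$ happen to be comparable, then their meet is simply the smaller of the two and there is nothing to prove; so I would assume they are incomparable. Incomparability forces both $x<z$ and $\Delta(z,y)<z$ strictly: if, say, $x=z$, then $\Delta(z,y)\le z=x$, contradicting incomparability, and symmetrically for $\Delta(z,y)$. Thus $x$ and $\Delta(z,y)$ are two elements strictly below $z$, which is exactly the situation the MR-axiom addresses.

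With the ambient element taken to be $z$, the MR-axiom says that $x\meet\Delta(z,y)$ fails to exist precisely when $\Delta(z,x)\join\Delta(z,y)<z$. So it suffices to prove the identity
\begin{gather*}
\Delta(z,x)\join\Delta(z,y)=z.
\end{gather*}
Once this is established the join is not strictly below $z$, and the MR-axiom delivers the meet.

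To prove the identity I would argue that the map $u\mapsto\Delta(z,u)$ is an order-automorphism of the set of elements below $z$. Axiom d makes this map monotone on that set, and axiom c gives $\Delta(z,\Delta(z,u))=u$ for every $u\le z$, so the map is an involution; a monotone involution is automatically an order-isomorphism (it is its own monotone inverse), and order-isomorphisms preserve all joins that remain inside the set. Since the elements below $z$ form a join-subsemilattice of $\mathcal L$, I conclude
\begin{gather*}
\Delta(z,x)\join\Delta(z,y)=\Delta(z,x\join y)=\Delta(z,z).
\end{gather*}
Finally $\Delta(z,z)=z$, because $z$ is the top of this ideal and is therefore fixed by the order-automorphism $\Delta(z,\cdot)$ (equivalently, axiom a gives $\Delta(z,z)\le z$, and the involution then forces equality). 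This yields the identity and completes the argument.

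The routine parts—rewriting $x\caret y$, the comparability reduction, and checking $\Delta(z,z)=z$—are straightforward. The step I expect to carry the real weight is the join-preservation of $\Delta(z,\cdot)$: the MR-axiom only converts the existence of a meet into an equation about a join, so everything hinges on recognising $\Delta(z,\cdot)$ as an order-automorphism of the principal ideal below $z$ and hence a semilattice homomorphism. If the ``monotone involution is an order-automorphism'' principle must be spelled out at the level of the cubic axioms rather than invoked abstractly, that is where the care will be required.
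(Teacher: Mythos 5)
Your proof is correct, and its skeleton is the same as the paper's: both pass to $z=x\join y$ and read the MR-axiom in contrapositive form, converting existence of the meet $x\meet\Delta(z,y)$ into the assertion that a certain join equals $z$. The difference is which of the two elements you reflect. The paper takes $a=\Delta(z,y)$ and $b=x$ in the axiom, so the join in question is $\Delta(z,\Delta(z,y))\join x$, which collapses to $y\join x=z$ in a single step by the involution axiom (c); the whole proof is two lines. You take $a=x$ and $b=\Delta(z,y)$, so you must instead prove $\Delta(z,x)\join\Delta(z,y)=z$, and this costs you the auxiliary fact that $\Delta(z,\cdot)$ is a monotone involution on the ideal below $z$, hence an order-automorphism, hence join-preserving. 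That lemma is correct as you state it, and it is a genuinely useful structural fact (the paper itself later appeals to $\Delta(a,-)$ being an automorphism of $]\leftarrow, a]$), but for this particular statement it is avoidable by the other choice of roles. One point in your favour: your comparability reduction explicitly discharges the strictness hypothesis $a, b<z$ of the MR-axiom, a step the paper's proof passes over in silence.
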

\begin{proof}
	The meet will not exist iff $\Delta(x\join y, \Delta(x\join y, 
	y))\join x<x\join y$. 
	But the left-hand-side is exactly $x\join y$.
\end{proof}

To get the converse we need the following result from \cite{BO:eq} 
theorem 4.3. 
\begin{thm}\label{thm:cons}
	Let $\mathcal L$ be a cubic algebra.
	Let $a, b$ be in ${\mathcal L}$. There is no pair of elements 
	$x_1, x_2$ such that $a, b<x_1, x_2$ and 
	$$
	\Delta(x_1, a)\join b=x_1, \qquad\Delta(x_2, a)\join b<x_2. 
	$$
\end{thm}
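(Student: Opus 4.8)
The plan is to show that the predicate $Q(y):\equiv[\Delta(y,a)\join b=y]$ takes a constant truth value as $y$ ranges over the common upper bounds of $a$ and $b$; the theorem is then immediate, since $a,b<x_1$ and $a,b<x_2$ would force $Q(x_1)\iff Q(x_2)$, contradicting the hypothesis that $Q(x_1)$ holds while $Q(x_2)$ fails. Because the common upper bounds of $a$ and $b$ form an up-directed set (closed under $\join$), it suffices to treat a comparable pair: I would prove that whenever $a,b\le x\le z$ one has $\Delta(x,a)\join b=x$ iff $\Delta(z,a)\join b=z$. Given arbitrary $x_1,x_2$ I then set $w=x_1\join x_2\ge x_1,x_2$ and apply the comparable case to $(x_1,w)$ and to $(x_2,w)$, which already lie above $a,b$, to conclude $Q(x_1)\iff Q(w)\iff Q(x_2)$.

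Everything then reduces to a single identity, which I would call the \emph{localization identity}: for $a,b\le x\le z$,
\[
(\Delta(z,a)\join b)\meet x=\Delta(x,a)\join b,
\]
the meet on the left existing because the order-interval $[b,z]$ is a Boolean algebra (a basic structural fact about cubic algebras from \cite{BO:eq}) and both $x$ and $\Delta(z,a)\join b$ lie in it. Writing $t=\Delta(z,a)\join b$, both implications fall out cleanly once this is granted. For the downward direction, if $t=z$ then $t\meet x=z\meet x=x$ (as $x\le z$), so $\Delta(x,a)\join b=x$. For the upward direction, if $\Delta(x,a)\join b=x$ then the identity gives $t\meet x=x$, i.e. $x\le t$; since $a\le x\le t$ we have $t\join a=t$, while axiom (a) yields $t\join a=\Delta(z,a)\join a\join b=z\join b=z$ (using $b\le z$), whence $t=z$. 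Thus the one lemma governs both directions, and the slightly asymmetric treatment of $\le$ versus $=$ in the statement is exactly what this asymmetry of $\meet x$ captures.

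The main obstacle is proving the localization identity itself — morally, that restriction to the subface $x$ commutes with reflection. I would establish it as two inequalities inside $[b,z]$. The inequality $\Delta(x,a)\join b\le x$ is automatic, and $\Delta(x,a)\join b\le t$ is a monotonicity statement for which I would use axiom (b) in the form $\Delta(z,\Delta(x,a))=\Delta(\Delta(z,x),\Delta(z,a))$ together with the monotonicity axiom (d) and the involution axiom (c) applied to the chain $a\le x\le z$. The genuinely delicate half is the reverse inequality, namely that every common lower bound of $x$ and $t$ lies below $\Delta(x,a)\join b$; here I expect to have to combine the Boolean structure of $[b,z]$, the decomposition of $t$ relative to the complement of $x$ in $[b,z]$, and the defining axioms of $\Delta$. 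As a sanity check guiding the computation, in the signed-set algebra $\rsf S(X)$ with $a=\langle A,B\rangle$, $b=\langle C,D\rangle$, $x=\langle P,Q\rangle$ all three quantities evaluate to $\langle P\cup(B\cap C),Q\cup(A\cap D)\rangle$, which both confirms the identity and shows each side equals $x$ exactly when $A\cap D=B\cap C=\emptyset$ — the coordinate-free shadow of ``$a\meet b$ exists'' that makes $Q$ independent of the ambient bound.
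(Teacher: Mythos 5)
The paper offers no proof of this statement to compare against: it is imported verbatim from \cite{BO:eq} (Theorem 4.3 there). So your attempt has to be judged on its own terms. Its skeleton is sound and, as far as it goes, correct: the common upper bounds of $a,b$ are closed under $\join$, so constancy of the predicate $Q(y)$, i.e.\ of ``$\Delta(y,a)\join b=y$'', on \emph{comparable} pairs of upper bounds does give the theorem via $w=x_1\join x_2$; and, writing $t=\Delta(z,a)\join b$, both directions of the comparable case do follow from your localization identity $(\Delta(z,a)\join b)\meet x=\Delta(x,a)\join b$ exactly as you argue (downward trivially, upward since $x\le t$ gives $t=t\join a=\Delta(z,a)\join a\join b=z\join b=z$ by cubic axiom (a)). The identity itself is true --- it checks out by direct computation in any interval algebra --- and your signed-set verification is accurate, including the observation that $Q(x)$ reduces to the $x$-free condition $A\cap D=B\cap C=\emptyset$.

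The genuine gap is that the localization identity, which you yourself identify as carrying the entire proof, is never established. For the crucial inequality $t\meet x\le\Delta(x,a)\join b$ you say only that you ``expect to have to combine'' the Boolean structure of $[b,z]$ with the $\Delta$-axioms, and even the easier half $\Delta(x,a)\le t$ is presented as a list of axioms you ``would use'' rather than a derivation; so the one claim on which everything rests remains a (correct) conjecture. Nor is closing it a formality: from the bare axioms the interaction of $\Delta$ with relative complementation is delicate. For instance, the natural guess that the complement of $y$ in $[b,z]$ is $b\join\Delta(z,y)$ is false --- already in $\rsf I(\wp(\{1,2\}))$ with $b=[\{1\},\{1\}]$, $y=[\emptyset,\{1\}]$, $z=[\emptyset,\{1,2\}]$ that expression equals $z$ rather than the true complement $[\{1\},\{1,2\}]$; the correct formula is $(y\rightarrow b)\meet z$, as used elsewhere in this paper. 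The quickest honest repair is to invoke the embedding of an arbitrary cubic algebra into an interval algebra, a fact this paper itself uses freely (proof of Lemma~\ref{lem:extra}): in $\rsf I(B)$ one computes directly that for $a,b\le x$ the equation $\Delta(x,a)\join b=x$ holds iff $a_0\le b_1$ and $b_0\le a_1$, a condition not mentioning $x$, which proves the theorem (and your identity) at once. But if the goal is a proof from the axioms alone --- presumably what \cite{BO:eq} does, and prudent since the embedding theorem may itself rest on this very result --- then the hard half of your identity is exactly the work that still has to be supplied.
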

Using this we can get the desired connection between caret and the MR 
axiom.
\begin{thm}
	Let $\mathcal L$ be a cubic algebra on which caret is total. Then 
	$\mathcal L$ satisfies the MR-axiom.
\end{thm}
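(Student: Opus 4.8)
The plan is to establish the biconditional of the MR-axiom in its two directions, after first disposing of the degenerate cases. Fix $a,b<x$. If $a\le b$ then axiom a gives $\Delta(x,a)\join b\ge\Delta(x,a)\join a=x$, so $\Delta(x,a)\join b=x$ while $a\meet b=a$ exists; if $b\le a$ then axiom d gives $\Delta(x,b)\le\Delta(x,a)$, whence $\Delta(x,a)\join b\ge\Delta(x,b)\join b=x$, so again the join is $x$ while $a\meet b=b$ exists. In both comparable cases the two sides of the equivalence are simultaneously false, so I may assume $a$ and $b$ are incomparable; in particular $a\join b$ is a strict upper bound of each.

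For the implication ``$a\meet b$ does not exist $\Rightarrow\Delta(x,a)\join b<x$'' I would argue by contraposition, and this is the only place the totality of $\caret$ is used directly. Suppose $\Delta(x,a)\join b=x$ and put $c=\Delta(x,a)$, so $b\join c=x$. Then by \defref{def:caret} and axiom c, $b\caret c=b\meet\Delta(b\join c,c)=b\meet\Delta(x,c)=b\meet a$. Since $\caret$ is total this meet exists, so $a\meet b$ exists. Hence non-existence of $a\meet b$ forces $\Delta(x,a)\join b\ne x$, and as the join is always $\le x$, it lies strictly below.

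For the reverse implication I would invoke \thmref{thm:cons}: since that result makes the truth of ``$\Delta(x,a)\join b=x$'' independent of the particular $x$ lying above $a$ and $b$, it suffices, assuming $m=a\meet b$ exists, to exhibit a single witness $u$ with $a,b<u$ and $\Delta(u,a)\join b=u$; the forbidden-pair conclusion of \thmref{thm:cons} then transfers the equality to the given $x$. By incomparability the natural witness $u=a\join b$ does satisfy $a,b<u$.

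The substance of the theorem is therefore the witness identity $\Delta(u,a)\join b=u$ for $u=a\join b$ when $m=a\meet b$ exists. The ingredients I would try to assemble are: first, that $\Delta(u,-)$ is an order-automorphism of the principal ideal $\{z:z\le u\}$ (an order-preserving involution, by axioms c and d), hence preserves binary joins, giving $\Delta(u,a)\join\Delta(u,b)=\Delta(u,a\join b)=u$; secondly, that totality yields $b\caret a=b\meet\Delta(u,a)$ with the genuinely $m$-dependent identity $(b\caret a)\join m=b$, which (writing $c=\Delta(u,a)$) collapses the goal to $c\join b=c\join m$ and so to the single inequality $a\le c\join m$. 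This last inequality is where I expect the real difficulty. The naive bounds all degenerate into the conclusion itself because $m\le a$, so one cannot escape using $m$ as the \emph{greatest} lower bound of $a$ and $b$ rather than merely as a lower bound. This is precisely the ``important lower bound'' role that $\caret$ is advertised to play, and isolating the lower-bound lemma that delivers $a\le c\join m$ is the crux of the argument; once it is in hand, \thmref{thm:cons} completes the proof.
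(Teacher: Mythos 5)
Your proposal has a genuine gap: the direction ``$a\meet b$ exists $\Rightarrow\Delta(x,a)\join b=x$'' is never proved. You reduce it, via \thmref{thm:cons} and the unproven identity $(b\caret a)\join m=b$, to the inequality $a\le\Delta(a\join b,a)\join m$, which you then explicitly leave open as ``the crux.'' But this direction is in fact the easy one, and the ``naive bound'' you dismiss as degenerate closes it immediately: if $m=a\meet b$ exists, then $m\le a\le x$ gives $\Delta(x,m)\le\Delta(x,a)$ by axiom (d), and $m\le b$, so $\Delta(x,a)\join b\ge\Delta(x,m)\join m=x$ by axiom (a); since $\Delta(x,a)\join b\le x$ always, equality holds. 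This is exactly the paper's argument; it works directly at the given $x$, needs neither a witness transfer via \thmref{thm:cons} nor totality of $\caret$, and does not require incomparability of $a$ and $b$. Indeed it is the very computation you yourself used to dispose of the case $b\le a$, with $m$ in place of $b$. Note also that the reduction you attempt rests on $(b\caret a)\join m=b$, which is likewise unjustified, so the gap is not confined to the final inequality.

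Your other direction, by contrast, is correct and is actually cleaner than the paper's. The paper assumes the join equals $x$, splits off the comparable case, uses \thmref{thm:cons} to transfer the equality down to $a\join b$, and only then applies totality of $\caret$ to the pair $(a,\Delta(a\join b,b))$. You instead apply totality directly at $x$: with $c=\Delta(x,a)$ the hypothesis gives $b\join c=x$, so $b\caret c=b\meet\Delta(b\join c,c)=b\meet\Delta(x,\Delta(x,a))=b\meet a$ by axiom (c), and totality makes this meet exist. This eliminates \thmref{thm:cons} from the proof altogether. Had you paired this observation with the two-line monotonicity argument above, you would have a complete proof shorter than the paper's.
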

\begin{proof}
	Let $a, b<x$.
	
	First,  if $a\meet b$ exists then we have
	$x\geq a\join\Delta(x, b)\geq(a\meet b)\join\Delta(x, a\meet b)=x$.
	
	Conversely suppose that $a\join\Delta(x, b)=x$. There are two cases 
	\begin{enumerate}[-]
			\item  if $a\join b$ is one of $a$ or $b$,  then $a$ and $b$ are 
			comparable and the meet clearly exists.
		
			\item  Otherwise $a, b<a\join b$. By \thmref{thm:cons} we must have 
			\begin{equation}\label{eq:one}
				a\join\Delta(a\join b, b)=a\join b.
			\end{equation}
			Then we have
			\begin{align*}			
				a\caret\Delta(a\join b, b)&=a\meet\Delta(a\join\Delta(a\join b, b), 
				\Delta(a\join b, b))&&\text{ by definition}\\
				&=a\meet\Delta(a\join b, \Delta(a\join b, b))&&\text{ by 
				\eqref{eq:one}}\\
				&=a\meet b.
			\end{align*}
		\end{enumerate}
\end{proof}

\section{Axiomatics}
We turn now to providing an axiomatic description of cubic 
algebras with caret. The first version gives a caret-like operation 
that is enough to give all MR-algebras,  but not strong enough to 
prove that caret satisfies definition \ref{def:caret}. Following this 
result we consider how to improve the fit. 

The fact that the axioms provided are universal shows that the class 
of MR-algebras forms a variety.

\begin{thm}
	Let $\mathcal L$ be a join-semilatice with $1$ and a binary 
	operation  $\caret $ (caret) satisfying the following axioms:
	\begin{enumerate}[(a)]
			\item  $\forall x, y$\qquad $x\join(y\caret x)=x\join y$;
		
			\item  $\forall x, y$\qquad $(1\caret x)\caret(1\caret 
			y)=1\caret(x\caret y)$;
		
			\item  $\forall x$\qquad $1\caret(1\caret x)=x$;
		
			\item  $\forall x, y$\qquad if $x\le y$ then $1\caret x\le 1\caret y$;
		
			\item  If we define $x\rightarrow y$ as equal to 
			$y\join(1\caret(x\caret y))$ then
			\begin{enumerate}[i)]
							\item  $(x\rightarrow y)\rightarrow y=x\join y$;
						
							\item  $x\rightarrow(y\rightarrow z)=y\rightarrow(x\rightarrow z)$;
						\end{enumerate}
		
			\item  $\forall x, y$\qquad $(x\join y)\rightarrow((x\join y)\caret 
			y)=1\caret(x\rightarrow y)$;
		
			\item  $\forall x, y$\qquad $x\caret y\le (x\join y)\caret y$;
		
			\item  $\forall x, y$\qquad $x\caret y\le x$.
		\end{enumerate}
		Then $\mathcal L$ is an MR-algebra.
\end{thm}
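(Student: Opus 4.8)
The plan is to manufacture a reflection operation $\Delta$ out of $\caret$, show that $(\mathcal L,\join,1,\Delta)$ is a cubic algebra, and then invoke the earlier theorem that a cubic algebra on which caret is total satisfies the MR-axiom. The definition of $\Delta$ is dictated by \defref{def:caret}: in any MR-algebra one has $y\caret x=y\meet\Delta(y,x)=\Delta(y,x)$ whenever $x\le y$, so I would set $\Delta(y,x):=y\caret x$ for $x\le y$. Axiom (h) gives $\Delta(y,x)=y\caret x\le y$, so this is a well-defined reflection on each segment, and it remains to check the six cubic axioms together with the totality of the caret defined from this $\Delta$ via \defref{def:caret}.

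Cubic axiom (a), namely $\Delta(y,x)\join x=y$ for $x\le y$, is immediate from our axiom (a), since then $x\join y=y$. The single-top instances of the remaining order-theoretic cubic axioms are exactly the hypotheses: axiom (c) is $\Delta(1,\Delta(1,x))=x$, axiom (d) is monotonicity of $\Delta(1,-)$, and axiom (b), after renaming, is the composition law $\Delta(1,\Delta(y,x))=\Delta(\Delta(1,y),\Delta(1,x))$ at the top. The substantive work is to globalize these from top element $1$ to an arbitrary top $z$: one must prove $z\caret(z\caret x)=x$, monotonicity of $z\caret(-)$, and $z\caret(y\caret x)=(z\caret y)\caret(z\caret x)$ for $x\le y\le z$. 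For this I would first establish that $\sigma:=1\caret(-)$ is a period-two order-automorphism of $\mathcal L$ (from (c) and (d)), and then relate the local reflection $\Delta(z,-)$ to $\sigma$ through the implication $\rightarrow$, transferring each global identity down into the interval below $z$.

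Next I would identify the operation $\rightarrow$ of axiom (e) with the cubic implication $xy=\Delta(1,\Delta(x\join y,y))\join y=(1\caret((x\join y)\caret y))\join y$. Axioms (g) and (h) give $x\caret y\le x$ and $x\caret y\le(x\join y)\caret y$, so $x\caret y$ is a lower bound of both $x$ and $(x\join y)\caret y$; axiom (f) is engineered to force $x\caret y$ to be exactly the meet $x\meet((x\join y)\caret y)$. This simultaneously shows that meet exists—so the caret of \defref{def:caret} is total and agrees with the given $\caret$—and yields $x\rightarrow y=xy$. Once $\rightarrow$ is the cubic implication, axioms (e)i and (e)ii are precisely the two implication axioms in the definition of a cubic algebra.

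With all six cubic axioms verified and the definitional caret shown to be total, the earlier theorem delivers the MR-axiom, so $\mathcal L$ is an MR-algebra. I expect the main obstacle to be the globalization in the second step: passing from the single-top laws (b), (c), (d) to arbitrary tops $z$, in tandem with the companion fact that $x\caret y$ realizes the meet $x\meet((x\join y)\caret y)$. The delicate point is that the axioms only constrain $\caret$ against the fixed reflection $\sigma=1\caret(-)$ and through the derived implication, so the whole argument hinges on extracting enough from axiom (f) to pin the local reflections $\Delta(z,-)$ down to their intended values.
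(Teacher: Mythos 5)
Your overall architecture matches the paper's: define $\Delta(y,x)=y\caret x$ for $x\le y$, verify the cubic axioms, show that the operation of \defref{def:caret} is total, and invoke the theorem that a cubic algebra with total caret satisfies the MR-axiom. The fatal problem is the claim on which your third step rests: that axiom (f) is ``engineered to force'' $x\caret y$ to equal $x\meet((x\join y)\caret y)$, so that the abstract caret agrees with the definitional one. This is not merely unproven, it is false, and the paper proves it is false: \lemref{lem:extra} shows that on any MR-algebra \emph{every} operation $p$ satisfying $p(x,y)=\Delta(x,y)$ when $y\le x$, $p(x,y)\le x$, and $p(x,y)\le\Delta(x\join y,y)$ satisfies all of axioms (a)--(h), and such a $p$ may lie strictly below $x\meet\Delta(x\join y,y)$ whenever $x$ and $y$ are incomparable. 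That is exactly why the paper introduces the extra axiom (i) in \thmref{thm:extra}: the identification of $\caret$ with $x\meet\Delta(x\join y, y)$ is unobtainable from (a)--(h) alone. Fortunately the theorem does not need that identification, only the existence of the meet.

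What the axioms do give --- and what you correctly note --- is that (g) and (h) make $x\caret y$ a common lower bound of $x$ and $(x\join y)\caret y=\Delta(x\join y,y)$. But to conclude from a common lower bound that the meet \emph{exists}, and to identify your $\rightarrow$ with the cubic implication $xy=y\join\Delta(1,\Delta(x\join y,y))$ (which you need so that hypotheses (e)i--ii yield cubic axioms (e) and (f)), you must first know that $\rightarrow$ makes $\mathcal L$ an implication algebra: in an implication algebra two elements with a common lower bound always have a meet, and $x\rightarrow y=(x\join y)\rightarrow y$ is an implication-algebra identity that gives $x\rightarrow y=xy$ at once. Your proposal never establishes this structure. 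Hypothesis (e) supplies only two of the three implication-algebra axioms; the contraction law $(x\rightarrow y)\rightarrow x=x$ is not assumed and has to be derived from the other axioms --- the paper does this in its first three lemmas, via $(x\rightarrow y)\join x=1$ and $(x\rightarrow y)\caret x\le 1\caret x$. The same implication structure is also the engine behind the ``globalization'' you defer in your second step (the paper's form of it is the identity $\Delta(a,b)=a\meet\Delta(1,a\rightarrow b)$, whose proof uses (f) together with implication-algebra facts), so as sketched that step cannot be completed either.
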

\begin{proof}
	We proceed via a series of lemmata. We aim to show that with these 
	axioms we can define a $\Delta$-operator that makes $\mathcal 
	L$ into a cubic algebra on which the operation 
	$\brk<a, b>\mapsto a\meet\Delta(a\join b, b)$ is total. From 
	the results above this implies that $\mathcal L$ is an 
	MR-algebra. However it is not enough to show that the caret 
	operations are the same,  for that we need one more axiom as 
	we see in \lemref{lem:extra}. This is given in 
	\thmref{thm:extra}. 
	
	The next four lemmas establish that $\brk<\mathcal L, \to>$ 
	is an implication algebra,  by checking each of the axioms in 
	turn.
	
\begin{lem}
	$(x\rightarrow y)\join x=1$.
\end{lem}
\begin{proof}
	We recall that $(x\rightarrow y)\join x=(x\join y)\join 
	1\caret(x\caret y)$. Also we have 
	\begin{align*}
		x\caret y\le(x\join y)\caret y&\le x\join y&&\text{ by (g, h)}\\
		\text{Therefore }(x\rightarrow y)\join x&\geq (x\caret y)\join 
		1\caret(x\caret y) \\
		&=1\join (x\caret y)&&\text{ by (a)}\\
		&=1.
		\end{align*}
\end{proof}

\begin{lem}
	$(x\rightarrow y)\caret x\le 1\caret x$.
\end{lem}
\begin{proof}
	By axioms (c) and (d) the lemma is true iff $1\caret((x\rightarrow y)\caret x)\le x$.
	\begin{align*}
			1\caret((x\rightarrow y)\caret x)&=(1\caret(x\rightarrow 
			y))\caret(1\caret x)\\
			&\le \bigl((1\caret(x\rightarrow y))\join(1\caret 
			x)\bigr)\caret(1\caret x)&&\text{ by (g)}\\
			(1\caret(x\rightarrow y))\join(1\caret x)&=1\caret((x\rightarrow 
			y)\join x)\\
			&=1\caret 1=1.\\
			\intertext{Hence }
			1\caret((x\rightarrow y)\caret x)&\le 1\caret(1\caret x)=x.
		\end{align*}
\end{proof}

\begin{lem}
	$(x\rightarrow y)\rightarrow x=x$.
\end{lem}
\begin{proof}
	By definition of $\rightarrow$,  the left-hand-side is greater than 
	$x$.
	
	By the definition of $\rightarrow$ and the proof of the last lemma 
	we have 
	$(x\rightarrow y)\rightarrow x=x\join1\caret((x\rightarrow y)\caret 
	x)\le x\join x=x$.
\end{proof}

\begin{prop}
	$\brk<\mathcal L, \rightarrow>$ is an implication algebra.
\end{prop}
\begin{proof}
	This clear,  as the last lemma and axiom (e) give the axioms for 
	implication algebras.
\end{proof}

Now we turn to the verification that we have a cubic algebra by first 
defining $\Delta$ and then checking each of the remaining axioms. 

\begin{defn}
	Let $a\geq b$. Then $\Delta(a, b)=a\caret b$.
\end{defn}

\begin{lem}
	Let $a\geq b$. Then $\Delta(a, \Delta(a, b))=b$.
\end{lem}
\begin{proof}
	By (h) we know that $\Delta(a, b)\le a$. By (f) we have $\Delta(1, 
	a\rightarrow b)=1\caret(a\rightarrow b)=a\rightarrow(a\caret 
	b)=a\rightarrow\Delta(a, b)$. As $a\rightarrow\Delta(a, b)\geq\Delta(a, 
	b)$ and $\rightarrow$ is an implication operation we have
	$a\meet(a\rightarrow\Delta(a, b))=\Delta(a, b)=a\meet\Delta(1, 
	a\rightarrow b)$. Hence we have
	\begin{align*}
			\Delta(a, \Delta(a, b))&=a\meet\Delta(1, a\rightarrow\Delta(a, 
			b))\\
		&=a\meet\Delta(1, \Delta(1, a\rightarrow b))\\
		&=a\meet(a\rightarrow b)\\
		&=b.
	\end{align*}
\end{proof}

\begin{lem}
	Let $c\le b\le a$. Then $\Delta(a, c)\le\Delta(a, b)$.
\end{lem}
\begin{proof}
	Since $\rightarrow$ is an implication operation and $c\le b\le a$ we 
	have 
	$a\rightarrow c\le b\rightarrow c$ and $a\rightarrow 
	b=b\join(a\rightarrow c)$.Thus
	\begin{align*}
		\Delta(a, b)&=a\meet\Delta(1, a\rightarrow b)\\
		&=a\meet\Delta(1, b\join(a\rightarrow c))\\
		&=a\meet(\Delta(1, b)\join\Delta(1, a\rightarrow c))\\
		&\geq a\meet\Delta(1, a\rightarrow c)\\
		&=\Delta(a, c).
	\end{align*}
\end{proof}

\begin{lem}
	If $c\le b\le a$ then $\Delta(b, c)=b\meet\Delta(a, (b\rightarrow 
	c)\meet a)$.
\end{lem}
\begin{proof}
	First we note that 
		\begin{align*}
			a\rightarrow((b\rightarrow c)\meet a)&=((b\rightarrow c)\meet 
			a)\join(a\rightarrow c)&&\text{as }(b\rightarrow c)\meet a\geq c\\
			&=\bigl[(b\rightarrow c)\join(a\rightarrow 
			c)\bigr]\meet\bigl[a\join(a\rightarrow)\bigr]&&\text{in }[c, 1]\\
			&=(b\rightarrow c)\meet 1\\
			&=b\rightarrow c.\\
			\intertext{From this we can conclude that }
			\Delta(a, (b\rightarrow c)\meet a)&=a\meet\Delta(1, 
			a\rightarrow((b\rightarrow c)\meet a))\\
			&=a\meet\Delta(1, b\rightarrow c).\\
			\intertext{And so we get }
			\Delta(b, c)&=b\meet\Delta(1, b\rightarrow c)\\
			&=(b\meet a)\meet\Delta(1, b\rightarrow c)\\
			&=b\meet\Delta(a, (b\rightarrow c)\meet a).
		\end{align*}
\end{proof}

\begin{lem}
	If $c\le b\le a$ then $\Delta(a, \Delta(b, c))=\Delta(\Delta(a, b), 
	\Delta(a, c))$.
\end{lem}
\begin{proof}
	From the last lemma we have 
	\begin{align*}
			\Delta(a, \Delta(b, c))&=\Delta(a, b\meet\Delta(a, (b\rightarrow 
			c)\meet a))\\
			&=\Delta(a, b)\meet\Delta(a, \Delta(a, (b\rightarrow c)\meet a))\\
			&=\Delta(a, b)\meet (b\rightarrow c)\meet a.\\
			\intertext{The other side gives}
			\Delta(\Delta(a, b), \Delta(a, c))&=\Delta(a, b)\meet\Delta(a, 
			(\Delta(a, b)\rightarrow\Delta(a, c))\meet a).
		\end{align*}
		$(\Delta(a, b)\rightarrow\Delta(a, c))\meet a$ is the unique 
		complement of $\Delta(a, b)$ in $[\Delta(a, c), a]$ and $\Delta(a, -)$ 
		is an automorphism on $]\leftarrow, a]$ so its image under 
		$\Delta(a, -)$ is the unique complement of $\Delta(a, \Delta(a, b))=b$ 
		in $[c, a]$. But this is $(b\rightarrow c)\meet a$. Hence
		$(b\rightarrow c)\meet a = \Delta(a, 
			(\Delta(a, b)\rightarrow\Delta(a, c))\meet a)$.
\end{proof}

\begin{lem}
	If $b\le a$ then $a\rightarrow b=b\join\Delta(1, \Delta(a, b))$.
\end{lem}
\begin{proof}
	This is immediate from the definitions.
\end{proof}

\begin{cor}
	For any $a, b $ we have $a\rightarrow b=b\join\Delta(1, 
	\Delta(a\join b, b))$.
\end{cor}
\begin{proof}
	Since $a\rightarrow b = (a\join b)\rightarrow b$.
\end{proof}

It now follows from the above lemmas that the axioms of a cubic 
algebra are satisfied. To show that we have an MR-algebra it suffices 
to show that the `new' operation
\begin{equation}
	a\vartriangle b=a\meet\Delta(a\join b, b)
	\label{eq:caret}
\end{equation}
is total. 

In order to see that this is so,  it suffices to note that
$a\caret b\le a$ and $a\caret b\le\Delta(a\join b, b)$ and so
there is a lower bound to $a$ and $\Delta(a\join b, b)$. Since the 
algebra is an implication algebra we know that the meet always exists.
\end{proof}

Note that we have not proven that $\vartriangle$ and $\caret$ are 
the same operation. Indeed we cannot do so as the next lemma shows, 
although axioms (e-h) put strong constraints on the possibilities.

\begin{lem}\label{lem:extra}
	Let $\mathcal L$ be an MR-algebra, and for all
	$x, y$ let $p(x, y)$ be any element of $\mathcal L$ satisfying
	\begin{enumerate}[(a)]
			\item  if $x\geq y$ then $p(x, y)=\Delta(x, y)$;
			
			\item  $p(x, y)\le x$;
		
			\item  $p(x, y)\le \Delta(x\join y, y)$.
		\end{enumerate}
	Then $p$ is a caret operation on $\mathcal L$.
\end{lem}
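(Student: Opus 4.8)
The plan is to verify that $\langle\mathcal L,\join,1,p\rangle$ satisfies the universal axioms of the preceding theorem, so that $p$ genuinely functions as a caret. Two reductions drive everything. First, since $1\geq z$ for every $z$, hypothesis (a) forces $p(1,z)=\Delta(1,z)$; hence every term of the form $1\caret(-)$ built from $p$ is simply $\Delta(1,-)$, which is a monotone involution that (as used in the proof above) distributes over joins. Second, hypotheses (b) and (c) say exactly that $p(x,y)$ is a common lower bound of $x$ and of $\Delta(x\join y,y)$; as $\mathcal L$ is an MR-algebra this meet exists, so $p(x,y)\leq x\meet\Delta(x\join y,y)$, the canonical caret of Definition~\ref{def:caret}. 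Thus $p$ never exceeds the canonical caret and agrees with it whenever $x\geq y$; the whole task is to show the axioms are insensitive to replacing the canonical caret by any such smaller $p$.

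The engine is a \emph{join lemma}: if $u\leq a$ and $t\leq\Delta(a,u)$ then $u\join t=a$. Indeed, assuming $u<a$ (the case $u=a$ is trivial), both $\Delta(a,u)$ and $t$ lie strictly below $a$, so the MR-axiom applied to the pair $\Delta(a,u),t$ says $\Delta(a,\Delta(a,u))\join t=u\join t<a$ iff $\Delta(a,u)\meet t$ fails to exist; but $t\leq\Delta(a,u)$ makes $\Delta(a,u)\meet t=t$ exist, so $u\join t=a$. Applying this with $a=x\join y$, $u=x$, $t=p(y,x)\leq\Delta(x\join y,x)=\Delta(a,u)$ yields $x\join p(y,x)=x\join y$, i.e. the theorem's axiom (a). The remaining easy axioms follow at once: axiom (h) is hypothesis (b); axiom (g) follows from hypothesis (c) together with $p(x\join y,y)=\Delta(x\join y,y)$ (a comparable pair); and axioms (c),(d) reduce to $\Delta(1,\Delta(1,x))=x$ and the monotonicity of $\Delta(1,-)$.

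For the implication axioms I would show that $x\rightarrow_p y:=y\join\Delta(1,p(x,y))$ coincides with the genuine MR-implication $x\rightarrow y=y\join\Delta(1,\Delta(x\join y,y))$ (the corollary above). Write $w=\Delta(x\join y,y)$. Apply the join lemma with $a=x\rightarrow y$ and $u=y$ (so $y\leq a$): since $p(x,y)\leq w$ gives $\Delta(1,p(x,y))\leq\Delta(1,w)$, and since a direct computation from the cubic identities gives the reflection identity $\Delta(x\rightarrow y,y)=\Delta(1,w)$, we get $\Delta(1,p(x,y))\leq\Delta(x\rightarrow y,y)=\Delta(a,u)$, and the join lemma yields $x\rightarrow_p y=y\join\Delta(1,p(x,y))=x\rightarrow y$. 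Once $\rightarrow_p=\rightarrow$ is known, axioms (e) and (f) hold because $\rightarrow$ is the MR-algebra's implication and the $\caret$-terms occurring in (f) are comparable pairs, hence equal to their canonical values.

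The step I expect to be the main obstacle is the homomorphism axiom (b), $p(\Delta(1,x),\Delta(1,y))=\Delta(1,p(x,y))$. This is the only axiom relating the value of $p$ at a pair to its value at the reflected pair $\bigl(\Delta(1,x),\Delta(1,y)\bigr)$, and nothing in hypotheses (a)--(c) links the two: one can choose $p$ independently on a pair and on its reflection so that (b) fails while (a)--(c) still hold. I therefore expect the intended notion of caret operation not to impose (b) (consistent with the following remark that a further axiom is needed before one can prove $\vartriangle=\caret$), the content of the lemma being precisely the choice-insensitivity of the remaining axioms established above; should (b) be retained, the hypotheses must be read as selecting $p$ compatibly under $\Delta(1,-)$, whereupon (b) reduces, via that automorphism, to the comparable-pair identity for the canonical caret.
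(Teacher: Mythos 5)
Your argument is correct in every step it carries out, and it takes a genuinely different route from the paper's. The paper's entire proof is the reduction ``it suffices to show $y\join\Delta(\one,\Delta(x\join y,y))=y\join\Delta(\one,p(x,y))$'' followed by a coordinate computation inside an interval algebra (legitimate because every cubic algebra embeds in one): from $x_{0}\join(x_{1}\meet\comp{y_{1}})\le p_{0}\le p_{1}\le x_{1}\meet(x_{0}\join\comp{y_{0}})$ it deduces $y_{0}\meet\comp{p_{1}}=y_{0}\meet\comp{x_{0}}$ and $y_{1}\join\comp{p_{0}}=y_{1}\join\comp{x_{1}}$. You prove the same invariance intrinsically: your join lemma is a clean consequence of the MR-axiom, and your reflection identity $\Delta(x\rightarrow y,y)=\Delta(\one,\Delta(x\join y,y))$ is correct (it also follows abstractly from $y\simeq\Delta(x\join y,y)\simeq\Delta(\one,\Delta(x\join y,y))$ and transitivity of $\simeq$), so together they give $y\join\Delta(\one,p(x,y))=x\rightarrow y$ with no appeal to the embedding theorem of \cite{BO:eq}. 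You are also more scrupulous than the paper: you check the theorem's axioms (a) and (c)--(h) individually, where the paper leaves the reader to see why implication-invariance settles everything.

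Your suspicion about the theorem's axiom (b) is not a defect of your proof but a genuine defect of the lemma, and you should assert it as a fact rather than hedge: hypotheses (a)--(c) do not imply $p(\Delta(\one,x),\Delta(\one,y))=\Delta(\one,p(x,y))$, and neither does implication-invariance, so the paper's own proof has exactly the same hole. Concretely, in $\rsf I(\wp(\{1,2\}))$ let $x=[\emptyset,\{1\}]$ and $y=\Delta(\one,x)=[\{2\},\{1,2\}]$. Then $x\join y=\one$, $x\meet\Delta(x\join y,y)=x$ and $y\meet\Delta(x\join y,x)=y$, so the hypotheses allow $p(x,y)=[\emptyset,\emptyset]$, $p(y,x)=y$, and $p=\vartriangle$ (the canonical operation $a\vartriangle b=a\meet\Delta(a\join b,b)$) on all other pairs. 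This $p$ satisfies (a)--(c), and the derived implication is still the canonical one, yet $(\one\caret x)\caret(\one\caret y)=p(y,x)=y$ while $\one\caret(x\caret y)=\Delta(\one,[\emptyset,\emptyset])=[\{1,2\},\{1,2\}]$, so axiom (b) fails. The repair is the one you sketch: either add to the lemma the equivariance hypothesis $p(\Delta(\one,x),\Delta(\one,y))=\Delta(\one,p(x,y))$ --- equivariant choices with $p\neq\vartriangle$ still exist, e.g.\ $p(x,y)=[\emptyset,\emptyset]$ and $p(y,x)=[\{1,2\},\{1,2\}]$ above, so the lemma keeps its intended role of showing that axioms (a)--(h) cannot force $\caret=\vartriangle$ --- or else read ``caret operation'' weakly as ``induces the same cubic structure,'' which is all that either your argument or the paper's computation actually establishes.
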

\begin{proof}
	It suffices to show that $y\join\Delta(\one , \Delta(x\join y, y))= y\join\Delta(\one , p(x, y))$ for any 
	such $p$.
	Since every cubic algebra embeds into an interval algebra we may do 
	all of the necessary computations in an interval algebra.
	\begin{align*}
		y\join\Delta(\one , \Delta(x\join y, y))&=
		[y_{0}\meet \comp x_{0}, y_{1}\join\comp x_{1}]\\
		p(x, y)=[p_{0}, p_{1}]&\le x\meet\Delta(x\join y, y)\\
		&=[x_{0}\join(x_{1}\meet\comp y_{1}), x_{1}\meet(x_{0}\join\comp 
		y_{0})]\\
		\intertext{Therefore}
		x_{0}\join(x_{1}\meet\comp y_{1})\le p_{0}&\le p_{1}\le x_{1}\meet(x_{0}\join\comp 
		y_{0})&&(*)\\
		y\join\Delta(\one , p(x, y))&=[y_{0}\meet\comp p_{1}, y_{1}\join\comp 
		p_{0}]\\
		y_{0}\meet\comp p_{1}&\le y_{0}\meet\comp x_{0}\meet(\comp 
		x_{1}\join y_{1})&&\text{ by }(*)\\
		&=y_{0}\meet\comp x_{0}&&\text{ as }y_{0}\le y_{1}\\
		y_{0}\meet\comp p_{1}&\geq y_{0}\meet(\comp
				x_{1}\join(\comp x_{0}\meet	y_{0}))&&\text{ by }(*)\\
		&=(y_{0}\meet\comp x_{1})\join(y_{0}\meet\comp x_{0})\\
		&=y_{0}\meet\comp x_{0}&&\text{ as }x_{0}\le x_{1}.\\
		\intertext{Likewise}
		y_{1}\join \comp p_{0}&=y_{1}\join\comp x_{1}.
	\end{align*}
\end{proof}

This lemma shows that we need to add an additional axiom in order to 
ensure that caret is definable by \eqref{eq:caret}. 

\begin{thm}\label{thm:extra}
	Suppose that $\mathcal L$ is as in the above theorem, satisfying the 
	additional axiom
	\begin{enumerate}[i)]
		\item[\textup{(i)}] $(x\join y)\caret y\le x\rightarrow(x\caret y)$
	\end{enumerate}
	then $\mathcal L$ is an MR-algebra and $x\caret y=x\meet\Delta(x\join 
	y, y)$ for all $x, y$.
\end{thm}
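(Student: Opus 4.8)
The plan is to show that the additional axiom (i) forces the caret operation to coincide with the derived operation $\vartriangle$ defined by equation \eqref{eq:caret}, after which the previous theorem immediately gives that $\mathcal L$ is an MR-algebra. Since the previous theorem already establishes that $\mathcal L$ is a cubic algebra with a total operation $\vartriangle$, and \lemref{lem:extra} tells us precisely which operations can legitimately serve as caret, the entire task reduces to verifying that $\caret$ satisfies the three defining conditions (a)--(c) of \lemref{lem:extra}. Conditions (b) and (c) of that lemma are exactly axioms (h) and (g) of the previous theorem, so those come for free. The only genuine content is condition (a): that $x\geq y$ implies $x\caret y=\Delta(x, y)$.

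First I would recall that the $\Delta$-operator was \emph{defined} in the proof of the previous theorem by $\Delta(a, b)=a\caret b$ whenever $a\geq b$. This means condition (a) of \lemref{lem:extra} holds \emph{by definition} for our $\caret$, so in fact $\caret$ already satisfies all three hypotheses of \lemref{lem:extra} with $p=\caret$. Consequently \lemref{lem:extra} tells us that $\caret$ is \emph{a} caret operation in the sense that $y\join\Delta(\one, \caret)$ agrees with $y\join\Delta(\one, \vartriangle)$. However, this alone does not yet pin down $x\caret y=x\meet\Delta(x\join y, y)$ as an equality of elements: the conclusion of \lemref{lem:extra} only controls the value after applying $y\join\Delta(\one, -)$. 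This is where axiom (i) must do its work, and this is where the real obstacle lies.

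The hard part will be upgrading the ``caret operation'' conclusion of \lemref{lem:extra} to a genuine equality $x\caret y=x\vartriangle y$. The strategy is to perform the computation inside an interval algebra, as in the proof of \lemref{lem:extra}, since every cubic algebra embeds into one. Writing $x\caret y=[p_0, p_1]$, the bound $(*)$ from \lemref{lem:extra} gives $x_0\join(x_1\meet\comp y_1)\le p_0\le p_1\le x_1\meet(x_0\join\comp y_0)$. To conclude equality with $\vartriangle$, I must show the reverse inequalities, i.e.\ that $p_0\le x_0\join(x_1\meet\comp y_1)$ and $p_1\geq x_1\meet(x_0\join\comp y_0)$. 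The plan is to translate the new axiom (i), namely $(x\join y)\caret y\le x\rightarrow(x\caret y)$, into interval-algebra coordinates and extract exactly these two inequalities. I would compute both sides of (i) componentwise: the left side $(x\join y)\caret y$ and the right side $x\rightarrow(x\caret y)$, the latter using the established formula $a\rightarrow b=b\join\Delta(1,\Delta(a\join b, b))$ together with $x\caret y=[p_0,p_1]$, and then read off the coordinate constraints forced by the inequality.

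Finally, once the coordinate inequalities combine with $(*)$ to yield $p_0=x_0\join(x_1\meet\comp y_1)$ and $p_1=x_1\meet(x_0\join\comp y_0)$, we obtain $x\caret y=[x_0\join(x_1\meet\comp y_1), x_1\meet(x_0\join\comp y_0)]=x\meet\Delta(x\join y, y)=x\vartriangle y$ directly. Since $\vartriangle$ is total and equals $\caret$, \defref{def:caret} is satisfied, and the theorem preceding \lemref{lem:extra} then certifies that $\mathcal L$ is an MR-algebra on which $\caret$ is exactly the operation of \defref{def:caret}. I expect the main subtlety to be bookkeeping in the interval-algebra computation of axiom (i): ensuring that the meet/join manipulations are carried out in the correct interval $[c, 1]$ (as flagged in the earlier lemmas) and that the embedding genuinely lets us reduce the abstract inequality to the concrete coordinatewise one.
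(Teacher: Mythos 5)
Your approach is correct, but it is genuinely different from --- and much heavier than --- the paper's. The paper proves the identity by a purely order-theoretic squeeze inside $\mathcal L$, with no embedding at all: by axioms (g) and (h), $x\caret y\le x\meet\bigl((x\join y)\caret y\bigr)$; since $x\join y\geq y$, the definition of $\Delta$ makes the middle term $x\meet\Delta(x\join y, y)$; the new axiom (i) gives $x\meet\Delta(x\join y, y)\le x\meet\bigl(x\rightarrow(x\caret y)\bigr)$; and since $\rightarrow$ is an implication operation and $x\caret y\le x$, the last term equals $x\caret y$, forcing equality throughout (the MR property itself comes from the previous theorem, as you say). Your route instead pushes the whole problem into an interval algebra and finishes lemma \ref{lem:extra}'s computation: this does work, and for the record the computation you left unexecuted goes through --- one gets $x\rightarrow(x\caret y)=[p_0\meet\comp x_0,\ p_1\join\comp x_1]$ and $(x\join y)\caret y=[(x_0\meet y_0)\join(x_1\meet\comp y_1),\ (x_1\join y_1)\meet(x_0\join\comp y_0)]$, so axiom (i) yields $p_0\meet\comp x_0\le(x_0\meet y_0)\join(x_1\meet\comp y_1)$ and $(x_1\join y_1)\meet(x_0\join\comp y_0)\le p_1\join\comp x_1$, and meeting/joining with $x_0$, $x_1$ and invoking $(*)$ pins down $p_0=x_0\join(x_1\meet\comp y_1)$, $p_1=x_1\meet(x_0\join\comp y_0)$. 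Two loose ends you should tighten: first, that coordinate extraction \emph{is} the entire proof on your route, so it cannot be left as a plan; second, equality of images in the interval algebra must be transferred back to $\mathcal L$, which needs the (easy but unstated) fact that the cubic embedding is injective and order-reflecting, so that the image of the existing meet $x\meet\Delta(x\join y, y)$ lying below the image of $x\caret y$ forces $x\meet\Delta(x\join y, y)\le x\caret y$, the reverse inequality being axioms (g), (h). Also, your preliminary detour through the hypotheses of lemma \ref{lem:extra} adds nothing: in the paper that lemma serves only as motivation, showing axioms (a)--(h) alone cannot force the equality. The trade-off: the paper's proof is three lines, intrinsic, and avoids representation theory; yours makes explicit, in coordinates, exactly how axiom (i) clamps both endpoints of $x\caret y$, at the cost of the embedding theorem and substantial bookkeeping.
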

\begin{proof}
	We only need to prove the last statement, and it is clear as
	$x\caret y\le x\meet ((x\join y)\caret y)= x\meet\Delta(x\join y, 
	y)\le x\meet (x\rightarrow(x\caret y))=x\caret y$.
\end{proof}

Our results show that the class of MR-algebras form a variety that is 
contained in the variety of cubic algebras. 

Any finite object in this variety satifies the hypotheses of 
\thmref{thm:MR} and is therefore isomorphic to the face lattice of 
an  $n$-cube.

\section{What is caret?}
The $\Delta$ operator on finite MR-algebras is very natural -- 
$\Delta(x, y)$ is the reflection of $y$ through the centre of the 
face $x$. But what of caret? 

\subsection{The signed set case}
Earlier we gave the example of the MR-algebra of signed sets. It is 
well-known that oriented matroids arise as subposets of $\rsf S(X)$, 
albeit with the reverse order,  but meet does not usually correspond 
to our join. However there is a close connection between composition 
and caret as we see in the next theorem. 
First we recall the definition of composition as used in  
$\rsf S(X)$. For notational convenience we write a signed set
$A$ as the pair $\brk<A^{+},  A^{-}>$. 

\begin{defn}\label{def:compositionOM}
	Let $A=\brk<A^{+}, A^{-}>$ and $B=\brk<B^{+}, B^{-}>$ be two signed
	subsets of $X$. Then the \emph{composition} of $A$ and $B$ is
	$$
		A\circ B=\brk<A^{+}\cup(B^{+}\setminus A^{-}),
A^{-}\cup(B^{-}\setminus A^{+})>.
	$$
\end{defn}

\begin{thm}
    Let $A$ and $B$ be two signed sets in $\rsf S(X)$. Then
    $$
    A\circ B= A\caret (\Delta B).
    $$
\end{thm}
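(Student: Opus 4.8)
The plan is to read $\Delta B$ as $\Delta(\one, B)$, the reflection of $B$ through the centre of the top element $\one$, and then to verify the identity by a direct coordinatewise computation in $\rsf S(X)$. The first observation is that for $B = \brk<B^{+}, B^{-}>$ the defining formula for $\Delta$ gives
\begin{equation*}
	\Delta(\one, B) = \Delta(\brk<\emptyset, \emptyset>, \brk<B^{+}, B^{-}>) = \brk<B^{-}, B^{+}>,
\end{equation*}
so that on signed sets $\Delta(\one, -)$ is exactly sign reversal. This is the feature one expects to reconcile caret with composition: composition uses the coordinates of $B$ directly, whereas $A \caret y$ applies $\Delta$ before meeting, so feeding in $\Delta B$ lets the two reflections cancel.

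Next, since both sides of the asserted equation are elements of $\rsf S(X)$, it suffices to compute the positive and negative coordinates of
\begin{equation*}
	A \caret (\Delta B) = A \meet \Delta\bigl(A \join \Delta B,\; \Delta B\bigr)
\end{equation*}
and to compare them with the coordinates of $A \circ B$ read off from \defref{def:compositionOM}. I would record the three needed formulas in $\rsf S(X)$: the join intersects coordinatewise, $\Delta$ is as in the signed-set example, and the meet, when it exists, is given by $\brk<A^{+}, A^{-}> \meet \brk<C^{+}, C^{-}> = \brk<A^{+}\cup C^{+},\; A^{-}\cup C^{-}>$ (the meet of the corresponding intervals in $\wp(X)$). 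The meet in question is itself a caret, hence total by the lemma that caret is total in an MR-algebra, so no separate existence check is required.

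The computation then runs as follows. One has $A \join \Delta B = \brk<A^{+}\cap B^{-},\; A^{-}\cap B^{+}>$, and expanding $\Delta$ against $\Delta B = \brk<B^{-}, B^{+}>$ gives a signed set whose positive coordinate is $\bigl[(A^{+}\cap B^{-})\cup B^{+}\bigr]\setminus(A^{-}\cap B^{+})$. Meeting with $A$ adjoins $A^{+}$ to this, and after simplification the positive coordinate becomes $A^{+}\cup(B^{+}\setminus A^{-})$, which is exactly the positive coordinate of $A \circ B$; the negative coordinate is handled by the symmetric calculation with $+$ and $-$ interchanged.

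The one place that needs genuine care --- and the main obstacle --- is the set-difference bookkeeping: simplifying $\bigl[(A^{+}\cap B^{-})\cup B^{+}\bigr]\setminus(A^{-}\cap B^{+})$ down to $(A^{+}\cap B^{-})\cup(B^{+}\setminus A^{-})$, and then absorbing the term $A^{+}\cap B^{-}$ once $A^{+}$ is adjoined, relies repeatedly on the disjointness constraints $A^{+}\cap A^{-} = \emptyset$ and $B^{+}\cap B^{-} = \emptyset$ built into the notion of a signed set, together with keeping the precedence of $\cup$ and $\setminus$ straight. Once these reductions are performed, both coordinates match \defref{def:compositionOM} and the theorem follows.
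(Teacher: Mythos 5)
Your proof is correct and follows essentially the same route as the paper: a direct coordinatewise computation in $\rsf S(X)$, using that $\Delta B=\Delta(\one,B)$ is sign reversal, that join is coordinatewise intersection, and that the meet (guaranteed to exist since $\rsf S(X)$ is an MR-algebra) is coordinatewise union, with the disjointness $A^{+}\cap A^{-}=B^{+}\cap B^{-}=\emptyset$ doing the final simplification. The only cosmetic difference is that the paper first computes $A\caret B$ for a general $B$ and then swaps $B^{+}\leftrightarrow B^{-}$ at the end, whereas you substitute $\Delta B$ from the start; the calculations are otherwise identical.
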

\begin{proof}
    \begin{align*}
        A\caret B & = A\meet \Delta(A\join B,  B)  \\
        A\join B & =\brk<A^{+}\cap B^{+},  A^{-}\cap B^{-}>  \\
	\Delta(A\join B,  B) & = \brk<(A^{+}\cap B^{+})\cup 
	B^{-}\setminus(A^{-}\cap B^{-}),  (A^{-}\cap B^{-})\cup 
	B^{+}\setminus(A^{+}\cap B^{+})>  \\
         & = \brk<(A^{+}\cap B^{+})\cup 
	B^{-}\setminus A^{-},  (A^{-}\cap B^{-})\cup 
	B^{+}\setminus A^{+}>   \\
        A\caret B & = \brk<A^{+} \cup 
	B^{-}\setminus A^{-},  A^{-} \cup 
	B^{+}\setminus A^{+}>\\
	\intertext{Therefore }
	A\caret \Delta B &= \brk<A^{+} \cup 
	B^{+}\setminus A^{-},  A^{-} \cup 
	B^{-}\setminus A^{+}>\\
	&= A\circ B.
    \end{align*}
\end{proof}

In a later paper (\cite{BO:comp}) we study this connection between MR-algebras and 
oriented matroids in greater depth. 

\subsection{The general case}
This gives us some idea about caret. But more can be seen by 
considering the collapse of an MR-algebra. To get this collapse we 
define two relations on $\mathcal L$ that give us the collapsing 
relation. 

\begin{defn}
	Let $\mathcal L$ be a cubic algebra and $a, b\in\mathcal L$. Then
	\begin{align*}
		a\preceq b &\text{ iff }\Delta(a\join b, a)\le b\\
		a\simeq b &\text{ iff }\Delta(a\join b, a)=b.
	\end{align*}
\end{defn}

\begin{lem}
	Let $\mathcal L$, $a$, $b$ be as in the definition. Then
	$$
	a\preceq b\text{ iff }b=(b\join a)\meet(b\join\Delta(1, a)).
	$$
\end{lem}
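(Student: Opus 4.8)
The plan is to compute both sides in an interval algebra, exactly as in the proof of \lemref{lem:extra}: since every cubic algebra embeds into an interval algebra $\rsf I(B)$ (an embedding that preserves $\join$, $\Delta$ and $\le$, and carries existing meets to meets), it suffices to verify the stated equivalence there. So I would write $a=[a_0,a_1]$ and $b=[b_0,b_1]$ with $a_0\le a_1$ and $b_0\le b_1$, and reduce each of the two sides to a pair of Boolean inequalities in $B$.

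First I would evaluate the right-hand side. From the formula for $\Delta$ one gets $\Delta(\one,a)=[\comp{a_1},\comp{a_0}]$, and hence
\[
b\join a=[b_0\meet a_0,\ b_1\join a_1],\qquad b\join\Delta(\one,a)=[b_0\meet\comp{a_1},\ b_1\join\comp{a_0}].
\]
Intersecting these two intervals and absorbing with the Boolean identities yields
\[
(b\join a)\meet(b\join\Delta(\one,a))=[\,b_0\meet(a_0\join\comp{a_1}),\ b_1\join(a_1\meet\comp{a_0})\,],
\]
so the equation $b=(b\join a)\meet(b\join\Delta(\one,a))$ holds precisely when $b_0\le a_0\join\comp{a_1}$ and $a_1\meet\comp{a_0}\le b_1$; call this pair of conditions $(\star)$. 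Next I would treat the left-hand side: computing $a\join b=[a_0\meet b_0,\ a_1\join b_1]$ and then $\Delta(a\join b,a)$ from the defining formula, the containment $\Delta(a\join b,a)\le b$ (i.e.\ $b_0$ below the lower endpoint and the upper endpoint below $b_1$) simplifies, after cancelling with $a_0\le a_1$ and $b_0\le b_1$, to exactly $(\star)$ again. Thus $a\preceq b$ and the displayed equation are each equivalent to $(\star)$, which gives the lemma.

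I expect the bulk of the work to be the Boolean bookkeeping in the two reductions — for instance showing that $b_0\le(a_0\meet b_0)\join(b_1\meet\comp{a_1})$ collapses to $b_0\meet a_1\meet\comp{a_0}=0$, which needs the absorptions $a_0\meet b_0\le b_0$, $a_0\le a_1$ and $b_0\le b_1$ applied in the right order. The one genuinely delicate point, and the part I would be most careful about, is the transfer of the meet across the embedding: the right-hand side asserts an actual greatest lower bound in $\mathcal L$, not merely in $\rsf I(B)$. Here the computation helps, since under $(\star)$ the intersection computed in $\rsf I(B)$ is exactly $[b_0,b_1]=b$, which therefore lies in the image and so is the meet in $\mathcal L$ as well; and when $(\star)$ fails the intersection strictly exceeds $b$. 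To conclude the equation really fails in $\mathcal L$ in that case, I would invoke the meet-preserving nature of the representation of \cite{BO:eq}, so that $b$ is the meet in $\mathcal L$ iff it is the meet in $\rsf I(B)$ iff $(\star)$.
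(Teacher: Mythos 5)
Your proposal is correct, but it necessarily takes a different route from the paper, whose entire proof of this lemma is the citation ``See \cite{BO:eq} lemmas 2.7 and 2.12'' --- no argument is given in the paper at all. Your Boolean bookkeeping checks out: both $\Delta(a\join b, a)\le b$ and $b=(b\join a)\meet(b\join\Delta(\one, a))$ reduce in $\rsf I(B)$ to the pair $b_{0}\le a_{0}\join\comp{a_{1}}$ and $a_{1}\meet\comp{a_{0}}\le b_{1}$. You also correctly isolate the one delicate point, transferring the meet back to $\mathcal L$, and your resolution is sound: since a cubic algebra is an implication algebra under the derived $\rightarrow$ (a fact the paper itself invokes when proving that caret is total), the meet of elements $x, y\geq a$ is given by the term $\bigl((x\rightarrow a)\join(y\rightarrow a)\bigr)\rightarrow a$, which the embedding preserves because $\rightarrow$ is a term in $\join$, $\Delta$, $\one$; hence existing meets really are carried to meets, as your argument requires. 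Note, though, that you could bypass this appeal altogether: the same style of interval computation shows that $\Delta(a\join b, a)$ lies below both $b\join a$ and $b\join\Delta(\one, a)$, and this statement involves only $\le$, $\join$ and $\Delta$, all preserved and reflected by the embedding; so $\Delta(a\join b, a)$ is a common lower bound of the two elements in $\mathcal L$ itself, and if the displayed meet exists in $\mathcal L$ and equals $b$, then $\Delta(a\join b, a)\le b$ at once. That yields the harder direction of the equivalence using only the properties of the embedding that the paper already relies on in \lemref{lem:extra}, and would make your proof fully self-contained where the paper's is merely a pointer to the literature.
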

\begin{proof}
	See \cite{BO:eq} lemmas 2.7 and 2.12.
\end{proof}

Also from \cite{BO:eq} (lemma 2.7c for transitivity) we know that $\simeq$ is an equivalence 
relation. In general it is not a congruence relation, but it does fit 
well with caret. In fact much more is true -- the structure $\mathcal 
L/\simeq$ is naturally an implication lattice. To show this we need 
to show that certain operations cohere with $\simeq$. 

However before doing so we show that this relation actually describes 
a natural property of intervals of Boolean algebras.
\begin{defn}
	Let $x=[x_{0}, x_{1}]$ be any interval in a Boolean algebra $B$. 
	Then the \emph{length} of $x$ is $x_{0}\join\comp x_{1}=\ell(x)$.
\end{defn}

\begin{lem}
	Let $b, c$ be intervals in a Boolean algebra $B$. Then 
	$$
	b\simeq c\iff\ell(b)=\ell(c).
	$$
\end{lem}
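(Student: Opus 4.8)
The plan is to compute everything directly in the interval algebra $\rsf I(B)$, writing $b=[b_0,b_1]$ and $c=[c_0,c_1]$, and to reduce both implications to Boolean identities. The engine of the proof will be a single auxiliary fact: \emph{reflection preserves length}, i.e. for $y\le x$ one has $\ell(\Delta(x,y))=\ell(y)$. Note first that $\comp{\ell(x)}=\comp{x_0}\meet x_1$, so $\ell(b)=\ell(c)$ is equivalent to $\comp{b_0}\meet b_1=\comp{c_0}\meet c_1$; this is the form of the hypothesis I will actually use. I would also record the two operation formulas needed, namely $b\join c=[b_0\meet c_0,\,b_1\join c_1]$ and $\Delta([x_0,x_1],[y_0,y_1])=[x_0\join(x_1\meet\comp{y_1}),\,x_1\meet(x_0\join\comp{y_0})]$.

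For the auxiliary fact, suppose $y\le x$, so that $x_0\le y_0\le y_1\le x_1$. Substituting the $\Delta$-formula into the definition $\ell(z)=z_0\join\comp{z_1}$ and expanding the complement of the upper coordinate, I get $x_0\join(x_1\meet\comp{y_1})\join\comp{x_1}\join(\comp{x_0}\meet y_0)$. Grouping the two outer terms as $x_0\join(\comp{x_0}\meet y_0)=y_0$ (using $x_0\le y_0$) and the two inner terms as $(x_1\meet\comp{y_1})\join\comp{x_1}=\comp{y_1}$ (using $y_1\le x_1$) collapses this to $y_0\join\comp{y_1}=\ell(y)$. The forward direction of the lemma is then immediate: if $b\simeq c$ then $c=\Delta(b\join c,b)$ with $b\le b\join c$, whence $\ell(c)=\ell(\Delta(b\join c,b))=\ell(b)$.

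For the converse I would exploit the hypothesis to reparametrize. Writing $F=\comp{b_0}\meet b_1=\comp{c_0}\meet c_1$, the inequalities $b_0\le b_1$ and $c_0\le c_1$ give $b_1=b_0\join F$ and $c_1=c_0\join F$, while $b_0\meet F=c_0\meet F=0$. With $b\join c=[b_0\meet c_0,\,b_0\join c_0\join F]$, I would feed these into $\Delta(b\join c,b)$ and simplify. The lower coordinate becomes $(b_0\meet c_0)\join(c_1\meet\comp{b_1})$, which reduces to $c_0$ after observing that $c_1\meet\comp{b_1}=c_0\meet\comp{b_0}$ (here $F\meet\comp F=0$ and $c_0\le\comp F$ do the work); the upper coordinate becomes $c_0\join(\comp{b_0}\meet(b_1\join c_1))$, which reduces to $c_0\join F=c_1$ (using $F\le\comp{b_0}$). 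Hence $\Delta(b\join c,b)=[c_0,c_1]=c$, that is, $b\simeq c$.

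The main obstacle is precisely this converse computation: unlike the forward direction, knowing that $\Delta(b\join c,b)$ has the right length does not by itself pin it down inside $b\join c$, so a genuine calculation is unavoidable. The decisive simplification is the reparametrization $b_1=b_0\join F$, $c_1=c_0\join F$ with $b_0,c_0$ disjoint from the common free set $F$; once this is in place the remaining Boolean identities are routine absorptions. An alternative would be to argue that both $c$ and $\Delta(b\join c,b)$ are elements $z\le b\join c$ with $z\join b=b\join c$ and $\ell(z)=\ell(b)$, and that these conditions determine $z$ uniquely; but establishing that uniqueness appears to cost as much as the direct computation, so I would favour the explicit route.
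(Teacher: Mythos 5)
Your proof is correct and takes essentially the same route as the paper: both directions are direct coordinate computations in the interval algebra, and your converse turns on exactly the same key identity $c_1\meet\comp{b_1}=c_0\meet\comp{b_0}$ that the paper extracts from $\ell(b)=\ell(c)$. The only difference is organizational: you package the forward direction as a general ``reflection preserves length'' lemma and run the converse through the reparametrization $b_1=b_0\join F$, $c_1=c_0\join F$ with $F=\comp{\ell(b)}$, whereas the paper substitutes and simplifies in place.
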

\begin{proof}
	From $b\simeq c$ we have 
	\begin{align*}
		\Delta(b\join c, c)&=[(b_{0}\meet c_{0})\join(\comp b_{1}\meet c_{1}), 
		(b_{1}\join c_{1})\meet(\comp b_{0}\join c_{0})]\\
		&=[c_{0}, c_{1}].\\
		\intertext{Hence}
		\ell(c)&= c_{0}\join\comp c_{1}\\
		&=\bigl((b_{0}\meet c_{0})\join(\comp b_{1}\meet 
		c_{1})\bigr)\join\comp{%
		\bigl((b_{1}\join c_{1})\meet(\comp b_{0}\join c_{0})\bigr)}\\
		&=(b_{0}\meet c_{0})\join(\comp b_{1}\meet 
		c_{1})\join(\comp b_{1}\meet\comp c_{1})\join(b_{0}\meet\comp 
		c_{0})\\
		&=b_{0}\join\comp b_{1}\\
		&=\ell(b).
	\end{align*}
	Comversely, if $\ell(b)=\ell(c)$ then we have
	\begin{align*}
		\Delta(b\join c, c)&=[(b_{0}\meet c_{0})\join(\comp b_{1}\meet c_{1}), 
		(b_{1}\join c_{1})\meet(\comp b_{0}\join c_{0})]\\
		\comp b_{1}\meet c_{1}&=c_{1}\meet(\comp b_{1}\join 
		b_{0})\meet(\comp b_{1}\join \comp b_{0})\\
		&=c_{1}\meet(\comp c_{1}\join 
		c_{0})\meet\comp b_{0}\\
		&=c_{0}\meet\comp b_{0}\\
		\intertext{and therefore }
		(b_{0}\meet c_{0})\join(\comp b_{1}\meet c_{1})&=
		(b_{0}\meet c_{0})\join(\comp b_{0}\meet c_{0})\\
		&=c_{0}.
	\end{align*}
	Likewise $(b_{1}\join c_{1})\meet(\comp b_{0}\join c_{0})=c_{1}$.
\end{proof}

First caret.

\begin{lem}\label{lem:caretComm}
	Let $\mathcal L$, $a$, $b$ be as in the definition. Then
	$$
	a\caret b\simeq b\caret a.
	$$
\end{lem}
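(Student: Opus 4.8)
The plan is to prove $a \caret b \simeq b \caret a$ by reducing everything to a concrete computation inside an interval algebra, exactly as was done in \lemref{lem:extra}. Since every cubic algebra embeds into an interval algebra $\rsf I(B)$, and both $\simeq$ and $\caret$ are defined in terms of the lattice and $\Delta$ operations (which are preserved by the embedding), it suffices to verify the claim for intervals $a = [a_0, a_1]$ and $b = [b_0, b_1]$ in a Boolean algebra $B$.

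Having set up the interval representation, I would exploit the length criterion just established: by the preceding lemma, $a \caret b \simeq b \caret a$ holds if and only if $\ell(a \caret b) = \ell(b \caret a)$, where $\ell([x_0, x_1]) = x_0 \join \comp x_1$. This converts a statement about the equivalence relation $\simeq$ (which involves $\Delta$ applied to a join) into a single computation of two Boolean-algebra elements and a check that they coincide. So the concrete steps are: first compute $a \caret b = a \meet \Delta(a \join b, b)$ explicitly as an interval $[p_0, p_1]$ using the interval-algebra formulas for $\join$ and $\Delta$ (the computation of $\Delta(a\join b, b)$ already appears in the signed-set theorem above and can be reused); then extract its length $p_0 \join \comp p_1$; then do the same for $b \caret a$; and finally verify the two lengths are equal Boolean expressions in $a_0, a_1, b_0, b_1$.

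The heart of the matter is the symmetry of $\ell(a \caret b)$ under swapping $a$ and $b$. I expect $a \caret b$ to have lower endpoint of the form $a_0 \join (a_1 \meet \comp b_1)$ and upper endpoint $a_1 \meet (a_0 \join \comp b_0)$ (mirroring the bounds $(*)$ in \lemref{lem:extra}), so its length $\ell(a\caret b)$ should simplify, after distributing the complement of the meet and absorbing, to a symmetric expression such as $a_0 \join b_0 \join (\comp a_1 \meet \comp b_1)$ or the like. The main obstacle is purely the Boolean bookkeeping: correctly applying De Morgan and the absorption laws to the complement of the upper endpoint and checking that every asymmetric term (those mixing $a_1$ with $b_0$, say) either cancels or is absorbed by a symmetric term. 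Once the length of $a \caret b$ is seen to be manifestly symmetric in $a$ and $b$, the equality $\ell(a \caret b) = \ell(b \caret a)$ is immediate and the lemma follows by the length criterion.
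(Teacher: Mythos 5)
Your proof is correct, but it takes a genuinely different route from the paper's. The paper argues intrinsically, in three lines: since $\Delta(a\join b,\cdot)$ is an involutive order-isomorphism of $]\leftarrow, a\join b]$, it carries the existing meet to a meet, so
$\Delta(a\join b, a\caret b)=\Delta(a\join b,a)\meet\Delta(a\join b,\Delta(a\join b,b))=\Delta(a\join b,a)\meet b=b\caret a$;
then cubic axiom (a) gives $(a\caret b)\join(b\caret a)=a\join b$, so the displayed equation is literally the defining condition for $a\caret b\simeq b\caret a$. Your route instead passes through the interval-algebra representation and the length criterion proved just before this lemma, and the computation does go through: with $a\caret b=[a_0\join(a_1\meet\comp{b_1}),\; a_1\meet(a_0\join\comp{b_0})]$ one finds $\ell(a\caret b)=a_0\join b_0\join\comp{a_1}\join\comp{b_1}=\ell(a)\join\ell(b)$, which is manifestly symmetric (your guessed form $a_0\join b_0\join(\comp{a_1}\meet\comp{b_1})$ is not the right simplification, but the symmetry conclusion stands). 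The trade-offs: your approach depends on the embedding theorem and on the tacit fact that a cubic embedding carries $a\caret b$ to the caret of the images, i.e.\ that existing meets are preserved --- this is true (a meet over a common lower bound is definable from $\rightarrow$ inside any principal filter) and is exactly the license the paper itself invokes in \lemref{lem:extra} and in the lemma following this one, so it matches the paper's standard of rigor, but it is an extra dependency; what it buys is the explicit identity $\ell(a\caret b)=\ell(a)\join\ell(b)$, which is stronger than needed and nicely foreshadows the collapse result $\eqcl[a]\meet\eqcl[b]=\eqcl[a\caret b]$. The paper's argument is shorter, needs neither dependency, works verbatim in any cubic algebra in which the two carets exist, and yields the sharper structural fact that $b\caret a$ is precisely the reflection $\Delta(a\join b, a\caret b)$ of $a\caret b$ within $a\join b$.
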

\begin{proof}
	\begin{align*}
		\Delta(a\join b, a\caret b)&=\Delta(a\join b, a\meet\Delta(a\join b, 
		b))\\
		&=\Delta(a\join b, a)\meet\Delta(a\join b, \Delta(a\join b, 
		b))\\
		&=\Delta(a\join b, a)\meet b\\
		&=b\caret a.\\
		\intertext{It follows that $(a\caret b)\join (b\caret a)=a\join b$ and hence}
		\Delta((a\caret b)\join (b\caret a), a\caret b)&=\Delta(a\join b, 
		a\caret b)\\
		&=b\caret a.
	\end{align*}		
\end{proof}

\begin{lem}
	Let $\mathcal L$ be an MR algebra, and $a, \ b, \ c\in\mathcal L$ 
	with $b\simeq c$. Then $a\caret b\simeq a\caret c$.
\end{lem}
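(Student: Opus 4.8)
The plan is to reduce everything to a computation in an interval algebra. Since every cubic algebra embeds into an interval algebra $\rsf I(B)$ and the embedding preserves $\join$, $\Delta$, and existing meets, it preserves both the caret operation and the relation $\simeq$ (which is defined purely from $\join$ and $\Delta$); being injective, it also reflects $\simeq$. So it suffices to establish the claim for intervals in a Boolean algebra $B$, where I can invoke the preceding length characterization $b\simeq c\iff\ell(b)=\ell(c)$. Thus the whole lemma reduces to showing that $\ell(a\caret b)$ depends on $b$ only through $\ell(b)$.

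The concrete way I would do this is to prove the clean identity
\[
\ell(a\caret b)=\ell(a)\join\ell(b).
\]
Writing $a=[a_{0},a_{1}]$ and $b=[b_{0},b_{1}]$, I first compute $a\join b=[a_{0}\meet b_{0},\,a_{1}\join b_{1}]$, then apply the interval-algebra formula for $\Delta$ to obtain $\Delta(a\join b,b)=[(a_{0}\meet b_{0})\join(a_{1}\meet\comp{b_{1}}),\,(a_{1}\join b_{1})\meet(a_{0}\join\comp{b_{0}})]$. Intersecting with $a$ (meet in $\rsf I(B)$ is $[a_{0},a_{1}]\meet[u_{0},u_{1}]=[a_{0}\join u_{0},\,a_{1}\meet u_{1}]$) yields
\[
a\caret b=[\,a_{0}\join(a_{1}\meet\comp{b_{1}}),\;a_{1}\meet(a_{0}\join\comp{b_{0}})\,].
\]
Taking the length (left endpoint join complement of the right endpoint) and simplifying in $B$ — using absorption to collapse $a_{0}\join(\comp{a_{0}}\meet b_{0})=a_{0}\join b_{0}$ and $(a_{1}\meet\comp{b_{1}})\join\comp{a_{1}}=\comp{a_{1}}\join\comp{b_{1}}$ — gives exactly $(a_{0}\join\comp{a_{1}})\join(b_{0}\join\comp{b_{1}})=\ell(a)\join\ell(b)$.

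With the identity in hand the lemma is immediate: from $b\simeq c$ we get $\ell(b)=\ell(c)$, hence $\ell(a\caret b)=\ell(a)\join\ell(b)=\ell(a)\join\ell(c)=\ell(a\caret c)$, and the length characterization gives $a\caret b\simeq a\caret c$. I expect the only real work to be the Boolean simplification of $\ell(a\caret b)$; it is routine but one must be careful that the complement of the right endpoint distributes correctly and that the absorption steps are applied in the right order. The payoff is that $\ell$ turns caret into the join of lengths, which is exactly the symmetric, $\simeq$-compatible behaviour the lemma asserts — and which also re-explains \lemref{lem:caretComm}, since $\ell(a)\join\ell(b)=\ell(b)\join\ell(a)$.
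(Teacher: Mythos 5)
Your proof is correct, and it takes a genuinely different route from the paper's. Both proofs begin with the same reduction to an interval algebra (the paper simply asserts this as a ``we may assume,'' so your more explicit preservation/reflection argument is, if anything, more careful; note that preservation of existing meets rests on the fact that a meet of $x,y$ with common lower bound $z$ is definable as $((x\to z)\join(y\to z))\to z$ inside the Boolean interval $[z,\one]$, and cubic embeddings preserve $\to$ and $\join$). From there, however, the paper verifies the definition of $\simeq$ head-on: writing $a\caret b=[s_{0},s_{1}]$ and $a\caret c=[t_{0},t_{1}]$, it computes $\Delta\bigl((a\caret b)\join(a\caret c),\,a\caret b\bigr)$ componentwise and shows it equals $[t_{0},t_{1}]$, feeding in the hypothesis $b\simeq c$ through the component identities contained in $\Delta(b\join c,b)=c$. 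You instead prove the identity $\ell(a\caret b)=\ell(a)\join\ell(b)$ (your Boolean simplification is correct) and then quote the length characterization $b\simeq c\iff\ell(b)=\ell(c)$ from the immediately preceding lemma. This buys real economy: the computation is shorter and less error-prone, the identity is symmetric in $a$ and $b$ so it re-proves \lemref{lem:caretComm} for free, and it treats caret in exactly the way the paper itself later treats the operation $*$, where the length function is invoked. The paper's computation, by contrast, never uses the length lemma and stays entirely within the defining equation of $\simeq$; your approach is the more unified of the two.
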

\begin{proof}
	Since we are working in an MR-algebra we may assume that $a, b, c$ 
	are all in an interval algebra,  so let
	$a=[a_{0}, a_{1}], b=[b_{0}, b_{1}], $ and $c=[c_{0}, c_{1}]$. Then
	\begin{align*}
		a\caret b&=[a_{0}\join(a_{1}\meet\comp b_{1}), 
		a_{0}\join(a_{1}\meet\comp b_{0})]=[s_{0}, s_{1}], \\
		a\caret c&=[a_{0}\join(a_{1}\meet\comp c_{1}), 
		a_{0}\join(a_{1}\meet\comp c_{0})]=[t_{0}, t_{1}]\\
		\text{ and }\qquad
		\Delta(b\join c, b)&=[(b_{0}\meet c_{0})\join(b_{1}\meet\comp c_{1}), 
		(b_{1}\join c_{1})\meet(b_{0}\join \comp c_{0})]\\
		&=[c_{0}, c_{1}]&&\text{ as }b\simeq c\\
		\Delta((a\caret b)\join(a\caret c), a\caret b)&=[(s_{0}\meet 
		t_{0})\join(s_{1}\meet\comp t_{1}), 
		(s_{1}\join t_{1})\meet(s_{0}\join \comp t_{0})]\\
		s_{0}\meet t_{0}&=(a_{0}\join(a_{1}\meet\comp 
		b_{1}))\meet(a_{0}\join(a_{1}\meet\comp c_{1}))\\
		&=a_{0}\join(a_{1}\meet\comp b_{1}\meet\comp c_{1})\\
		s_{1}\meet \comp t_{1}&=a_{1}\meet(a_{0}\join\comp b_{0})\meet\comp 
		a_{0}\meet(\comp a_{1}\join c_{0})\\
		&=a_{1}\meet\comp a_{0}\meet\comp b_{0}\meet c_{0}\\
		\text{Therefore }\qquad 
		(s_{0}\meet t_{0})\join(s_{1}\meet\comp t_{1})&=
		a_{0}\join(a_{1}\meet\comp b_{1}\meet\comp c_{1})\join(a_{1}\meet\comp 
		b_{0}\meet c_{0})\\
		&=a_{0}\join(a_{1}\meet[(\comp b_{0}\meet c_{0})\join(\comp 
		b_{1}\meet\comp c_{1})])\\
		&=a_{0}\join(a_{1}\meet\comp{[(b_{0}\join\comp  c_{0})\meet(
				b_{1}\join c_{1})]})\\
		&=a_{0}\join(a_{1}\meet\comp c_{1})\\
		\intertext{Likewise (essentially dually) we have}
		(s_{1}\join t_{1})\meet(s_{0}\join \comp 
		t_{0})&=a_{0}\join(a_{1}\meet\comp c_{0}).
	\end{align*}
\end{proof}

\begin{lem}
	Let $\mathcal L$ be an MR algebra, and $a, \ b, \ c, \ d\in\mathcal L$ 
	with $a\simeq d$ and $b\simeq c$. Then $a\caret b\simeq d\caret c$.
\end{lem}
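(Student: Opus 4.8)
The plan is to reduce this two-sided congruence statement to the two preceding lemmas, which between them handle only the substitution of a single argument at a time. The key observation is that the immediately preceding lemma preserves $\simeq$ only when the left argument of caret is held fixed and the right argument is replaced by a $\simeq$-equivalent element. To move the substitution $a\simeq d$ into the \emph{left} slot as well, I would invoke the commutativity-up-to-$\simeq$ result of \lemref{lem:caretComm} to swap the two arguments, apply the single-variable lemma in the now-right slot, and then swap back.

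Concretely, I would assemble a chain of $\simeq$-relations and close it using the fact, recorded in the excerpt from \cite{BO:eq}, that $\simeq$ is an equivalence relation and in particular transitive. Starting from the hypothesis $b\simeq c$, the preceding lemma gives $a\caret b\simeq a\caret c$. Then \lemref{lem:caretComm} gives $a\caret c\simeq c\caret a$. Now the other hypothesis $a\simeq d$, together with the preceding lemma applied this time with the fixed left argument $c$, yields $c\caret a\simeq c\caret d$; and a final application of \lemref{lem:caretComm} gives $c\caret d\simeq d\caret c$. Splicing these four links by transitivity produces $a\caret b\simeq d\caret c$, as required.

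I do not anticipate a genuine obstacle here: the argument is purely formal bookkeeping over the already-established facts, and each step is a direct citation rather than a computation. The only point requiring a little care is recognizing that commutativity is genuinely indispensable—the single-variable congruence lemma as stated only permits substitution in the right-hand argument, so without \lemref{lem:caretComm} there would be no way to act on the left argument and the chain could not be closed. Should one prefer to avoid the double swap, an alternative would be to re-run the interval-algebra computation of the previous lemma symmetrically in both coordinates, but the chaining approach is shorter and reuses exactly what has been proved.
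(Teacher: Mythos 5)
Your proposal is correct and matches the paper's proof exactly: the paper's argument is precisely the chain $a\caret b\simeq a\caret c\simeq c\caret a\simeq c\caret d\simeq d\caret c$, using the one-sided congruence lemma twice and \lemref{lem:caretComm} twice, closed by transitivity of $\simeq$. Your observation that commutativity is what permits substitution in the left argument is exactly the point of the paper's construction.
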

\begin{proof}
	\begin{gather*}
		a\caret b\simeq a\caret c
		\simeq c\caret a
		\simeq c\caret d
		\simeq d\caret c.
	\end{gather*}
\end{proof}

A similar proof shows us that caret is associative mod $\simeq$. We 
leave this to the interested reader.
\begin{lem}
	Let $\mathcal L$ be an MR algebra, and $a, \ b, \ c\in\mathcal L$. 
	Then $a\caret (b\caret c)\simeq (a\caret b)\caret c$.
\end{lem}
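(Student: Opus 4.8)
The plan is to reduce associativity modulo $\simeq$ to the length characterization of $\simeq$ established above, exactly in the spirit of the two preceding lemmas. Since every cubic algebra---and hence every MR-algebra---embeds into an interval algebra, and since both $\caret$ and the relation $\simeq$ are built from $\join$ and $\Delta$ and are therefore preserved by such embeddings, I would carry out all computations for $a=[a_0,a_1]$, $b=[b_0,b_1]$, $c=[c_0,c_1]$ inside a fixed interval algebra. There the earlier lemma gives $b\simeq c$ iff $\ell(b)=\ell(c)$, so it suffices to prove that $a\caret(b\caret c)$ and $(a\caret b)\caret c$ have the same length.

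The engine of the argument is the single identity
\[
\ell(a\caret b)=\ell(a)\join\ell(b).
\]
I would establish this by substituting the coordinate formula $a\caret b=[\,a_0\join(a_1\meet\comp b_1),\ a_0\join(a_1\meet\comp b_0)\,]$ from the lemma above into the definition $\ell([x_0,x_1])=x_0\join\comp x_1$ and simplifying. Complementing the upper endpoint gives $\comp a_0\meet(\comp a_1\join b_0)$, and since $\comp a_1\le\comp a_0$ (from $a_0\le a_1$) this collapses to $\comp a_1\join(\comp a_0\meet b_0)$. Joining with the lower endpoint and regrouping, the two absorption steps $a_0\join(\comp a_0\meet b_0)=a_0\join b_0$ and $(a_1\meet\comp b_1)\join\comp a_1=\comp a_1\join\comp b_1$ yield $a_0\join b_0\join\comp a_1\join\comp b_1=\ell(a)\join\ell(b)$.

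Granting this identity, the conclusion is immediate: both sides have length $\ell(a)\join\ell(b)\join\ell(c)$, since $\ell(a\caret(b\caret c))=\ell(a)\join\bigl(\ell(b)\join\ell(c)\bigr)$ and $\ell((a\caret b)\caret c)=\bigl(\ell(a)\join\ell(b)\bigr)\join\ell(c)$, and $\join$ is associative. The length lemma then gives $a\caret(b\caret c)\simeq(a\caret b)\caret c$.

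The one place requiring care---the main obstacle, though a mild one---is the Boolean simplification proving $\ell(a\caret b)=\ell(a)\join\ell(b)$; everything upstream (the embedding into an interval algebra, the coordinate formula, the length criterion for $\simeq$) is already in hand, and everything downstream is merely associativity of $\join$. It is worth noting that this length identity also re-derives the previous two lemmas at a stroke: since $\ell$ is constant on $\simeq$-classes, $b\simeq c$ forces $\ell(a\caret b)=\ell(a)\join\ell(b)=\ell(a)\join\ell(c)=\ell(a\caret c)$, and symmetry of $\join$ gives $\ell(a\caret b)=\ell(b\caret a)$. This is presumably why the authors remark that ``a similar proof'' applies.
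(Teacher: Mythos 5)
Your proof is correct, and it is worth noting that the paper actually gives no proof of this lemma at all: it remarks only that ``a similar proof'' to the preceding congruence lemma works and leaves it to the reader. The proof the paper gestures at would be another direct coordinate computation --- write $a\caret(b\caret c)$ and $(a\caret b)\caret c$ as intervals and verify that $\Delta$ of their join carries one to the other, as was done for $a\caret b\simeq a\caret c$. You share the opening move (the WLOG passage to an interval algebra, which the paper itself invokes without comment in the neighbouring lemmas), but you replace the brute-force $\Delta$-verification by the identity $\ell(a\caret b)=\ell(a)\join\ell(b)$, which checks out: from $a\caret b=[a_0\join(a_1\meet\comp{b_1}),\ a_0\join(a_1\meet\comp{b_0})]$ your two absorption steps give $\ell(a\caret b)=a_0\join b_0\join\comp{a_1}\join\comp{b_1}=\ell(a)\join\ell(b)$. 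Combined with the paper's criterion $b\simeq c\iff\ell(b)=\ell(c)$, associativity mod $\simeq$ then reduces to associativity of $\join$ in the ambient Boolean algebra, and, as you observe, commutativity and both congruence lemmas fall out of the same identity --- a genuine economy over the paper's lemma-by-lemma computations. The only point you elide (though the paper elides it too) is that the embedding into an interval algebra must preserve caret, not just $\join$ and $\Delta$; this is harmless because $a\caret b$ is a meet of elements with a common lower bound $w$, hence is computed inside the Boolean filter $[w,\one]$ from $\join$ and $\rightarrow$, both of which cubic embeddings preserve, but it deserves a sentence in a written-up version.
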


Now we consider the second operation that will give rise to joins on 
$\mathcal L/\simeq$:
$$
a*b=a\join\Delta(a\join b, b).
$$
\begin{lem}
	$a*b\simeq b*a$.
\end{lem}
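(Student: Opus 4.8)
The plan is to prove $a*b\simeq b*a$ by exploiting the close structural parallel with \lemref{lem:caretComm}, where we showed $a\caret b\simeq b\caret a$. Recall that $a*b=a\join\Delta(a\join b,b)$, whereas $a\caret b=a\meet\Delta(a\join b,b)$. The key observation is that both $a*b$ and $b*a$ lie in the interval $[\,?,a\join b]$ and have $a\join b$ as their join with certain natural elements, so I expect $\simeq$ to be witnessed directly through the operator $\Delta(a\join b,-)$ restricted to the Boolean interval $[0,a\join b]$ (recalling that $\Delta(a\join b,-)$ is an automorphism of the segment below $a\join b$).

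First I would compute $\Delta(a\join b,a*b)$. Since $a*b=a\join\Delta(a\join b,b)$ and $\Delta(a\join b,-)$ distributes over $\join$ on elements below $a\join b$ (by the cubic-algebra axioms, as used repeatedly above), I would get
\begin{align*}
	\Delta(a\join b,a*b)&=\Delta(a\join b,a\join\Delta(a\join b,b))\\
	&=\Delta(a\join b,a)\join\Delta(a\join b,\Delta(a\join b,b))\\
	&=\Delta(a\join b,a)\join b\\
	&=b*a.
\end{align*}
This is the mirror image of the computation in \lemref{lem:caretComm}, with $\meet$ replaced by $\join$ throughout. The crucial algebraic facts I am using are that $\Delta(a\join b,-)$ is order-preserving and self-inverse on $[0,a\join b]$, and that it commutes with $\join$ there; all of these were established in the cubic-algebra development (the $\Delta$-distributivity over $\join$ appears in the proof that $\Delta(a,\Delta(b,c))=\Delta(\Delta(a,b),\Delta(a,c))$).

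Having shown $\Delta(a\join b,a*b)=b*a$, I would next verify that $(a*b)\join(b*a)=a\join b$, so that the common join $a\join b$ may legitimately be substituted for $(a*b)\join(b*a)$ inside $\Delta$. Since $a*b\geq a$ and $b*a\geq b$, their join is at least $a\join b$; and since both are $\le a\join b$ (as $a\le a\join b$ and $\Delta(a\join b,b)\le a\join b$), equality follows. Then
\begin{align*}
	\Delta((a*b)\join(b*a),a*b)&=\Delta(a\join b,a*b)\\
	&=b*a,
\end{align*}
which is exactly the statement $a*b\simeq b*a$ by definition of $\simeq$.

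The main obstacle, such as it is, lies in justifying the distribution step $\Delta(a\join b,a\join\Delta(a\join b,b))=\Delta(a\join b,a)\join\Delta(a\join b,b\text{'s image})$, since the general $\Delta$-distributivity over $\join$ is not literally one of the listed axioms but a derived property valid for the fixed first argument $a\join b$. I would make sure that both $a$ and $\Delta(a\join b,b)$ are genuinely $\le a\join b$ (the latter by axiom (h)-style bounds, i.e. $\Delta(z,y)\le z$), so that the automorphism $\Delta(a\join b,-)$ on the segment below $a\join b$ applies and distributes over the join of these two sub-elements. Everything else is a routine transcription of the caret computation, and I do not anticipate any genuine difficulty.
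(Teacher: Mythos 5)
Your proof is correct and follows essentially the same route as the paper: the paper's own (very terse) proof likewise observes that $\Delta(a\join b, a*b)=b*a$ and then repeats the argument of \lemref{lem:caretComm}, which is exactly your verification that $(a*b)\join(b*a)=a\join b$ followed by substitution into $\Delta$. Your filled-in justification of the join-distributivity step (via $\Delta(a\join b,-)$ being an order-preserving involution on the elements below $a\join b$) is the right way to make the paper's appeal to the earlier lemma explicit.
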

\begin{proof}
	We note that $\Delta(a\join b, a*b)=b*a$ and so we proceed as in 
	\lemref{lem:caretComm}.
\end{proof}

\begin{lem}
	Let $b\simeq c$. Then
	$a*b = a*c$ and $b*a = c*a$.
\end{lem}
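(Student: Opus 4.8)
The plan is to do both computations inside an interval algebra, which is legitimate since every cubic (hence MR-) algebra embeds into one, and such an embedding preserves $\join$, $\Delta$, and therefore $*$ and the relation $\simeq$. Writing $a=[a_0,a_1]$, $b=[b_0,b_1]$, $c=[c_0,c_1]$ in a Boolean algebra $B$, and recalling from the length lemma that $b\simeq c$ is exactly $\ell(b)=\ell(c)$, i.e. $b_0\join\comp b_1=c_0\join\comp c_1$, I would reduce each of the two claimed equalities to a Boolean identity in $B$ and settle it directly, rather than replacing it with a $\simeq$-statement at the outset.

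For the first equality I would compute $a*b=a\join\Delta(a\join b,b)$ outright. Using $a\join b=[a_0\meet b_0,a_1\join b_1]$ and the interval formula for $\Delta$, the reflection simplifies to $\Delta(a\join b,b)=[(a_0\meet b_0)\join(a_1\meet\comp b_1),(a_1\join b_1)\meet(a_0\join\comp b_0)]$; joining with $a$ and using $a_0\le a_1$ collapses the two coordinates to $a_0\meet(b_0\join\comp b_1)$ and $a_1\join(b_1\meet\comp b_0)$. Hence $a*b=[a_0\meet\ell(b),a_1\join\comp{\ell(b)}]$, an expression into which $b$ enters \emph{only} through $\ell(b)$. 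Since $\ell(b)=\ell(c)$, this gives $a*b=a*c$ on the nose, using nothing beyond the length lemma.

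For the second equality I would run the identical computation with the two arguments interchanged, obtaining $b*a=[b_0\meet\ell(a),b_1\join\comp{\ell(a)}]$ and $c*a=[c_0\meet\ell(a),c_1\join\comp{\ell(a)}]$, so that $b*a=c*a$ reduces to the pair of Boolean identities $b_0\meet\ell(a)=c_0\meet\ell(a)$ and $b_1\join\comp{\ell(a)}=c_1\join\comp{\ell(a)}$. Here the argument meets its main obstacle: the \emph{first} slot of $*$ sees $b$ through both coordinates, not merely through $\ell(b)$, so $\ell(b)=\ell(c)$ does not force these identities. A decisive check is $B=\wp(\{1,2\})$ with $b=[\emptyset,\{2\}]$, $c=[\{1\},\{1,2\}]$ (both of length $\{1\}$, so $b\simeq c$) and $a=[\{1\},\{1\}]$: since $\ell(a)=1$ one computes $b*a=b$ and $c*a=c$, and $b\ne c$. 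Thus the stated equality $b*a=c*a$ genuinely fails in an honest MR-algebra.

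Consequently the direct attack proves the first conjunct verbatim but refutes the second as an equality; the strongest true first-slot statement is $b*a\simeq c*a$, which I would obtain by chaining the first part with the commutativity-mod-$\simeq$ lemma, via $b*a\simeq a*b=a*c\simeq c*a$ together with transitivity of $\simeq$. I would therefore record $a*b=a*c$ as a true equality and propose correcting the second conjunct to $b*a\simeq c*a$; the asymmetry exposed above, $a*b=[a_0\meet\ell(b),\dots]$ against $b*a=[b_0\meet\ell(a),\dots]$, is exactly what explains why the two slots cannot behave the same way and pinpoints where the verbatim statement breaks.
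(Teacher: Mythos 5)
Your proof of the first conjunct is exactly the paper's: the paper likewise works in an interval algebra, reduces $a*b=a\join\Delta(a\join b,b)$ to $[a_{0}\meet\ell(b),\,a_{1}\join\comp{\ell(b)}]$, and concludes $a*b=a*c$ from $\ell(b)=\ell(c)$. For the second conjunct the paper offers no proof at all (``the second result is left to the interested reader''), and your refutation is correct: as an \emph{equality} it is false. Your counterexample checks out. In $\rsf I(\wp(\{1,2\}))$, with $b=[\emptyset,\{2\}]$ and $c=[\{1\},\{1,2\}]$ one has $\ell(b)=\ell(c)=\{1\}$, and indeed $\Delta(b\join c,b)=c$ can be verified directly from the interval formula, so $b\simeq c$; for the vertex $a=[\{1\},\{1\}]$ one has $\ell(a)=\{1,2\}=1$, so $b*a=[b_{0}\meet 1,\,b_{1}\join 0]=b$ and $c*a=c$, while $b\neq c$. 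The structural point you isolate is the right one: $b*a=[b_{0}\meet\ell(a),\,b_{1}\join\comp{\ell(a)}]$ depends on both endpoints of the first argument, not merely on its length, so first-slot invariance under $\simeq$ cannot hold on the nose. The correct salvage is $b*a\simeq c*a$, which follows either by your chain $b*a\simeq a*b=a*c\simeq c*a$ or directly from $\ell(b*a)=\ell(a)\meet\ell(b)=\ell(a)\meet\ell(c)=\ell(c*a)$ together with the length characterization of $\simeq$. Presumably the lemma's second conjunct is a typo for this $\simeq$-statement.

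It is also worth recording that the error is harmless to the rest of the paper: the only subsequent use of this lemma is the theorem that $a\simeq d$ and $b\simeq c$ imply $a*b\simeq d*c$, whose proof chain $a*b=a*c\simeq c*a=c*d\simeq d*c$ invokes the first conjunct twice (always varying the \emph{second} slot) together with commutativity of $*$ modulo $\simeq$; the false second conjunct is never used. So your proposal both reproduces the paper's argument where the paper gives one and correctly settles the part the paper left to the reader.
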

\begin{proof}
	As $b\simeq c$ we have $\ell(b)=\ell(c)$.
	Therefore we have
	\begin{align*}
	a*b=a\join\Delta(a\join b, b)&=[a_{0}\meet(b_{0}\join\comp b_{1}), 
	a_{1}\join(b_{1}\meet\comp b_{0})]\\
	&=[a_{0}\meet\ell(b), a_{1}\join\comp{\ell(b)}]\\
	&=[a_{0}\meet\ell(c), a_{1}\join\comp{\ell(c)}]\\
	&=a\join\Delta(a\join c, c)=a*c.
	\end{align*}	
	The second result is left to the interested reader.
\end{proof}


\begin{thm}
	Let $a\simeq d$ and $b\simeq c$. Then
	$$
	a*b\simeq d*c.
	$$
\end{thm}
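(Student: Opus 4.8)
The plan is to reduce the claim to the two preceding lemmas about $*$, in exact parallel with the caret version. I have available (i) the commutativity result $x*y\simeq y*x$, and (ii) the substitution result which, in the form actually proved, gives genuine equality $x*u=x*v$ whenever $u\simeq v$ — namely the case where the $\simeq$-equivalent pair occupies the \emph{second} argument of $*$, since there $x*u=[x_0\meet\ell(u),\,x_1\join\comp{\ell(u)}]$ depends on $u$ only through $\ell(u)$. The key observation is that $*$ respects $\simeq$ rigidly only in its second slot, so the strategy is: move whichever argument I wish to replace into that slot by commutativity, replace it, then move it back.

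Concretely I would run the chain
\[
a*b \;=\; a*c \;\simeq\; c*a \;=\; c*d \;\simeq\; d*c,
\]
where the first equality is the substitution lemma applied to $b\simeq c$ (second argument), the second step is commutativity, the third equality is the substitution lemma applied to $a\simeq d$ with $a$ now sitting in the second slot, and the last step is commutativity again. Since $\simeq$ is an equivalence relation, transitivity along this chain yields $a*b\simeq d*c$. This mirrors the earlier chain $a\caret b\simeq a\caret c\simeq c\caret a\simeq c\caret d\simeq d\caret c$ used for caret, the only difference being that here two of the links are honest equalities.

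A second, more computational route, which also explains why the conclusion is only $\simeq$ and not $=$, is to pass to an interval algebra — legitimate since every cubic algebra embeds into one and $*,\simeq$ are built from $\join$ and $\Delta$ — and use the characterization $x\simeq y\iff\ell(x)=\ell(y)$. From the interval formula $a*b=[a_0\meet\ell(b),\,a_1\join\comp{\ell(b)}]$ established earlier, a one-line Boolean computation gives $\ell(a*b)=(a_0\meet\ell(b))\join(\comp a_1\meet\ell(b))=\ell(b)\meet(a_0\join\comp a_1)=\ell(a)\meet\ell(b)$. Then $a\simeq d$ and $b\simeq c$ give $\ell(a)=\ell(d)$ and $\ell(b)=\ell(c)$, so $\ell(a*b)=\ell(a)\meet\ell(b)=\ell(d)\meet\ell(c)=\ell(d*c)$, whence $a*b\simeq d*c$.

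I expect no serious obstacle; the only delicate point is the same one that arose for caret, namely that the substitution must be routed through the argument in which $*$ behaves rigidly. The underlying reason is that $a*b$ depends on the \emph{full} first argument but only on the \emph{length} of the second, so replacing the first argument by a $\simeq$-equivalent one can change the value outright while preserving its length, hence its $\simeq$-class — which is exactly why the theorem can be stated only up to $\simeq$.
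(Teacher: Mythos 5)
Your first chain $a*b=a*c\simeq c*a=c*d\simeq d*c$ is word-for-word the paper's own proof, so the proposal is correct and takes essentially the same approach; your care in routing each substitution through the second argument of $*$ (the only slot for which genuine equality was actually proved) is exactly the right reading of the preceding lemmas. Your alternative computation via $\ell(a*b)=\ell(a)\meet\ell(b)$ is also valid and, as a bonus, explains why first-slot substitution can only ever hold up to $\simeq$, not equality.
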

\begin{proof}
	Since $a*b=a*c\simeq c*a=c*d\simeq d*c$.
\end{proof}

As with caret we can also establish associativity,  but this time we 
have true associativity. 
\begin{thm}
	For all $a, b, c$
	$$
	a*(b*c) = (a*b)*c.
	$$
\end{thm}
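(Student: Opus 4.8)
The plan is to reduce the identity to a direct computation inside an interval algebra. Since every cubic algebra embeds into an interval algebra, and the operation $*$ is defined purely from $\join$ and $\Delta$ (both preserved by such embeddings), it suffices to verify $a*(b*c)=(a*b)*c$ when $a,b,c$ are intervals $[a_{0},a_{1}]$, $[b_{0},b_{1}]$, $[c_{0},c_{1}]$ in a Boolean algebra. The computation for the preceding lemma already supplies the crucial closed form: writing $\ell(b)=b_{0}\join\comp b_{1}$ for the length, one has
\begin{equation*}
	a*b=[a_{0}\meet\ell(b),\,a_{1}\join\comp{\ell(b)}].
\end{equation*}

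First I would record that $*$ depends on its second argument only through that argument's length, and then compute the length of a starred product. A short De Morgan calculation gives
\begin{equation*}
	\ell(a*b)=(a_{0}\meet\ell(b))\join\comp{(a_{1}\join\comp{\ell(b)})}=\ell(b)\meet(a_{0}\join\comp a_{1})=\ell(a)\meet\ell(b),
\end{equation*}
so lengths multiply by $\meet$. This is the conceptual heart of the statement: the left endpoint of a $*$-product is controlled by the length through a $\meet$ and the right endpoint through the dual $\join$, and both of these Boolean operations are associative.

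With the length formula in hand the two sides unfold identically. On one side $\ell(b*c)=\ell(b)\meet\ell(c)$, whence
\begin{align*}
	a*(b*c)&=[a_{0}\meet\ell(b*c),\,a_{1}\join\comp{\ell(b*c)}]\\
	&=[a_{0}\meet\ell(b)\meet\ell(c),\,a_{1}\join\comp{\ell(b)}\join\comp{\ell(c)}].
\end{align*}
On the other side one applies the formula twice, using $(a*b)_{0}=a_{0}\meet\ell(b)$ and $(a*b)_{1}=a_{1}\join\comp{\ell(b)}$:
\begin{align*}
	(a*b)*c&=[(a*b)_{0}\meet\ell(c),\,(a*b)_{1}\join\comp{\ell(c)}]\\
	&=[a_{0}\meet\ell(b)\meet\ell(c),\,a_{1}\join\comp{\ell(b)}\join\comp{\ell(c)}].
\end{align*}
The two intervals coincide, which is the desired equality.

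I expect no serious obstacle here. Unlike caret associativity, which held only modulo $\simeq$, the explicit endpoint description yields a genuine equality rather than one merely up to equal length, precisely because $\meet$ and $\join$ are associative on the nose. The one point to state carefully is that the computation may legitimately be transported into the interval algebra, i.e.\ that the embedding preserves $*$; but this is immediate since $*$ is a composite of $\join$ and $\Delta$, both of which the embedding respects.
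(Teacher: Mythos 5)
Your proof is correct. The paper actually states this theorem without proof (it is one of the results left to the reader, like caret associativity), and your argument fills that omission using precisely the paper's own standard technique for the surrounding lemmas: embed into an interval algebra (legitimate since the embedding preserves $\join$ and $\Delta$, hence $*$) and compute with the closed form $a*b=[a_{0}\meet\ell(b),\,a_{1}\join\comp{\ell(b)}]$ already derived in the preceding lemma. Your key observation that $\ell(a*b)=\ell(a)\meet\ell(b)$, so that associativity of $*$ reduces to associativity of $\meet$ and $\join$ on endpoints, is exactly why the paper can claim ``true associativity'' here rather than associativity merely modulo $\simeq$.
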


Lastly we have to define an implication operation, and show that it 
satisfies the axioms for an implication operation.
For any $a, b$ we let
$$
a\Rightarrow b=\Delta(a\join b, a)\rightarrow b.
$$
A direct computation on intervals gives us
$$
a\Rightarrow b=[b_{0}\meet\comp{\ell(a)}, b_{1}\join\ell(a)].
$$

First this goes through $\simeq$:
\begin{lem}
	Let $b\simeq c$. Then $b\Rightarrow a=c\Rightarrow a$ and $a\Rightarrow b\simeq a\Rightarrow c$.
\end{lem}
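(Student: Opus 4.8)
The plan is to read off both statements directly from the interval formula $a\Rightarrow b=[b_{0}\meet\comp{\ell(a)}, b_{1}\join\ell(a)]$ recorded just above, combined with the characterization $b\simeq c\iff\ell(b)=\ell(c)$ established earlier. Since $\mathcal L$ is an MR-algebra it embeds into an interval algebra, so, as in the preceding lemmas, I would carry out all computations on intervals $a=[a_{0},a_{1}]$, $b=[b_{0},b_{1}]$, $c=[c_{0},c_{1}]$.

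For the first equation $b\Rightarrow a=c\Rightarrow a$, I would substitute into the formula to obtain $b\Rightarrow a=[a_{0}\meet\comp{\ell(b)}, a_{1}\join\ell(b)]$. Both endpoints depend on $b$ only through $\ell(b)$, so since $b\simeq c$ gives $\ell(b)=\ell(c)$ the two intervals are literally equal. Note that this is a genuine equality rather than mere $\simeq$-equivalence, precisely because the first argument of $\Rightarrow$ enters only via its length.

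For the second relation $a\Rightarrow b\simeq a\Rightarrow c$, I would instead compute the length of $a\Rightarrow b$. Writing $a\Rightarrow b=[b_{0}\meet\comp{\ell(a)}, b_{1}\join\ell(a)]$ and applying De Morgan gives
$$
\ell(a\Rightarrow b)=(b_{0}\meet\comp{\ell(a)})\join(\comp{b_{1}}\meet\comp{\ell(a)})=\comp{\ell(a)}\meet(b_{0}\join\comp{b_{1}})=\comp{\ell(a)}\meet\ell(b).
$$
This again depends on $b$ only through $\ell(b)$, so $\ell(a\Rightarrow b)=\comp{\ell(a)}\meet\ell(b)=\comp{\ell(a)}\meet\ell(c)=\ell(a\Rightarrow c)$, and the length characterization then yields $a\Rightarrow b\simeq a\Rightarrow c$.

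There is essentially no obstacle here beyond the routine Boolean simplification in the length computation; the only conceptual point worth flagging is the asymmetry just noted --- equality in the left argument but mere $\simeq$ in the right --- which is exactly what one expects, $\ell$ being an invariant that absorbs the first argument of $\Rightarrow$ completely while controlling the second only up to its own length.
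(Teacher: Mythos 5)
Your proof is correct, and for the second claim it takes a genuinely different route from the paper. For the first equality your argument is exactly the paper's: both read $b\Rightarrow a=[a_{0}\meet\comp{\ell(b)},\, a_{1}\join\ell(b)]$ off the displayed interval formula and observe that the dependence on $b$ is only through $\ell(b)$, so $b\simeq c$ (i.e.\ $\ell(b)=\ell(c)$) gives literal equality. For the second claim, however, you compute $\ell(a\Rightarrow b)=\comp{\ell(a)}\meet\ell(b)$ and invoke the earlier characterization $b\simeq c\iff\ell(b)=\ell(c)$ --- crucially its converse direction --- to conclude $a\Rightarrow b\simeq a\Rightarrow c$; the paper instead verifies the definition of $\simeq$ directly, computing $\Delta\bigl((a\Rightarrow b)\join(a\Rightarrow c),\, a\Rightarrow b\bigr)$ in the interval algebra and simplifying it by a longer Boolean calculation until it equals $a\Rightarrow c$. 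Both routes rest on the same reduction (embedding the MR-algebra into an interval algebra, which preserves $\join$ and $\Delta$ and hence $\simeq$), and both are legitimate since the length lemma is proved in both directions before this point. Your route is shorter and more conceptual: it treats $\ell$ as a complete invariant for $\simeq$ and shows how $\Rightarrow$ transforms it, and indeed the identity $\ell(a\Rightarrow b)=\ell(b)\meet\comp{\ell(a)}$ is one the paper itself derives later in verifying the implication axioms. The paper's computation buys only the explicit witness $\Delta$-identity, which is not needed once the invariant characterization is available.
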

\begin{proof}
	\begin{align*}
		b\Rightarrow a&=[a_{0}\meet\comp{\ell(b)}, a_{1}\join\ell(b)]\\
		&=[a_{0}\meet\comp{\ell(c)}, a_{1}\join\ell(c)]&&\text{ as }b\simeq 
		c\\
		&=c\Rightarrow a.
	\end{align*}

Let $\alpha=\ell(a)$. Then
	\begin{align*}
		a\Rightarrow b&=[b_{0}\meet\comp{\alpha}, b_{1}\join\alpha]\\
		a\Rightarrow c&=[c_{0}\meet\comp{\alpha}, c_{1}\join\alpha]\\
		(a\Rightarrow b)\join(a\Rightarrow c)&=
		[c_{0}\meet b_{0}\meet\comp{\alpha}, c_{1}\join 
		b_{1}\join\alpha]=[x_{0}, x_{1}]\\
		\Delta((a\Rightarrow b)\join(a\Rightarrow c), a\Rightarrow b)&=
		[x_{0}\join(x_{1}\meet\comp b_{1}\meet\comp\alpha), 
		x_{1}\meet(x_{0}\join\comp b_{0}\join\alpha)]\\
		x_{1}\meet\comp b_{1}\meet\comp\alpha&=(b_{1}\join 
		c_{1}\join\alpha)\meet\comp b_{1}\meet\comp\alpha\\
		&=c_{1}\meet\comp b_{1}\meet\comp\alpha\\
		\text{Therefore }x_{0}\join(x_{1}\meet\comp b_{1}\meet\comp\alpha)&=
		(b_{0}\meet c_{0}\meet\comp\alpha)\join(c_{1}\meet\comp 
		b_{1}\meet\comp\alpha)\\
		&=\comp\alpha\meet((b_{0}\meet c_{0})\join(c_{1}\meet\comp b_{1}))\\
		&=\comp\alpha\meet c_{0}&&\text{ as }b\simeq c.
	\end{align*}
	Likewise we have $x_{1}\meet(x_{0}\join\comp 
	b_{0}\join\alpha)=c_{1}\join\alpha$.
\end{proof}

Now we need to check that the axioms for implication work:
\begin{thm}
	Let $a, b, c$ be arbitrary. Then
	\begin{enumerate}[(a)]
			\item  $(a\Rightarrow b)\Rightarrow a=a$;
		
			\item  $(a\Rightarrow b)\Rightarrow b = b*a$;
		
			\item  $a\Rightarrow (b\Rightarrow c)=b\Rightarrow (a\Rightarrow c)$.
		\end{enumerate}
\end{thm}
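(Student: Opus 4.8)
The plan is to reduce all three identities to coordinatewise computations in an interval algebra. Since $\mathcal L$ is an MR-algebra it embeds into an interval algebra of a Boolean algebra, so writing $a=[a_0,a_1]$, $b=[b_0,b_1]$, $c=[c_0,c_1]$ it suffices to verify (a)--(c) using the formula
$$
a\Rightarrow b=[b_0\meet\comp{\ell(a)},\,b_1\join\ell(a)]
$$
established just above, together with $\ell([x_0,x_1])=x_0\join\comp{x_1}$ and the interval description $a*b=[a_0\meet\ell(b),\,a_1\join\comp{\ell(b)}]$ coming from the earlier lemma. All the work is then ordinary Boolean algebra.

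The linchpin I would prove first is a single auxiliary identity for the length of an implication: $\ell(a\Rightarrow b)=\comp{\ell(a)}\meet\ell(b)$. Substituting the formula for $a\Rightarrow b$ gives $\ell(a\Rightarrow b)=(b_0\meet\comp{\ell(a)})\join(\comp{b_1}\meet\comp{\ell(a)})=\comp{\ell(a)}\meet(b_0\join\comp{b_1})$, which is exactly $\comp{\ell(a)}\meet\ell(b)$. Because $\Rightarrow$ depends on its antecedent only through the length, this one identity is what keeps the nested implications in (b) and (c) manageable.

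For (a) I would apply the formula a second time with antecedent $a\Rightarrow b$: the bottom coordinate is $a_0\meet\comp{\ell(a\Rightarrow b)}=a_0\meet(\ell(a)\join\comp{\ell(b)})$, which collapses to $a_0$ since $a_0\le\ell(a)$; dually the top coordinate $a_1\join\ell(a\Rightarrow b)$ reduces to $a_1$ because $\comp{\ell(a)}=\comp{a_0}\meet a_1\le a_1$. Hence $(a\Rightarrow b)\Rightarrow a=[a_0,a_1]=a$. For (b) I would run the same substitution toward $b$: the bottom coordinate is $b_0\meet(\ell(a)\join\comp{\ell(b)})=b_0\meet\ell(a)$ using $b_0\meet\comp{\ell(b)}=0$, and the top coordinate simplifies to $b_1\join\comp{\ell(a)}$ via $b_1\join\ell(b)=1$; comparing with $b*a=[b_0\meet\ell(a),\,b_1\join\comp{\ell(a)}]$ yields the claim.

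Part (c) is the cleanest and I would do it last. Since $a\Rightarrow b$ sees $a$ only through $\ell(a)$, computing the inner implication and then the outer one shows both sides equal $[c_0\meet\comp{\ell(a)}\meet\comp{\ell(b)},\,c_1\join\ell(a)\join\ell(b)]$, which is visibly symmetric in $a$ and $b$. I do not expect a genuine obstacle once the length identity is in hand, as each part is then a short absorption. The only point needing care is the order of operations in the nested expressions of (b) and (c): the inner implication must be evaluated through the length identity \emph{before} the outer formula is applied, so I would compute $\ell$ of each inner implication first and only then substitute.
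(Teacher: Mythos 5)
Your proposal is correct and follows essentially the same route as the paper: both work coordinatewise in an interval algebra via the formula $a\Rightarrow b=[b_0\meet\comp{\ell(a)},\,b_1\join\ell(a)]$, compute $\ell(a\Rightarrow b)=\ell(b)\meet\comp{\ell(a)}$ (which the paper does inline in parts (a) and (b) rather than as a separate lemma), and finish each part by Boolean absorption, with (c) following from the visible symmetry in $\ell(a)$ and $\ell(b)$. The only difference is organizational --- you isolate the length identity up front, which is a clean way to package the same computation.
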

\begin{proof}
	Let $\alpha=\ell(a)$ and $\beta=\ell(b)$.
	\begin{enumerate}[(a)]
			\item   
				\begin{align*}
					(a\Rightarrow b)\Rightarrow a&=[b_{0}\meet\comp{\alpha}, 
					b_{1}\join\alpha]\Rightarrow a\\
					\ell(a\Rightarrow b)&=
					(b_{0}\meet\comp{\alpha})\join\comp{b_{1}\join\alpha}\\
					&=(b_{0}\meet\comp\alpha)\join(\comp b_{1}\meet\comp\alpha)\\
					&=\ell(b)\meet\comp\alpha.\\
					\text{Thus }
					[b_{0}\meet\comp{\alpha}, 
					b_{1}\join\alpha]\Rightarrow a&=
					[a_{0}\meet\comp{\beta\meet\comp\alpha}, 
					a_{1}\join(\beta\meet\comp\alpha)]\\
					&=[(a_{0}\meet\alpha)\join(a_{0}\meet\comp\beta), 
					(a_{1}\join\comp\alpha)\meet(a_{1}\join\beta)]\\
					a_{0}\meet\alpha&=a_{0}\meet(a_{0}\join\comp a_{1})=a_{0}\\
					a_{1}\join\comp\alpha&=a_{1}\join(\comp a_{0}\meet a_{1})=a_{1}\\
					\text{and so }
					[(a_{0}\meet\alpha)\join(a_{0}\meet\comp\beta), 
					(a_{1}\join\comp\alpha)\meet(a_{1}\join\beta)]&=
					[a_{0}\join(a_{0}\meet\comp\beta), 
					a_{1}\meet(a_{1}\join\beta)]\\
					&=a.
				\end{align*}
		
			\item  
			\begin{align*}
				(a\Rightarrow b)\Rightarrow b&=[b_{0}\meet\comp{\alpha}, 
					b_{1}\join\alpha]\Rightarrow b\\
					\ell(a\Rightarrow b)&=\beta\meet\comp\alpha.\\
					\text{Thus }
					[b_{0}\meet\comp{\alpha}, 
					b_{1}\join\alpha]\Rightarrow b&=
					[b_{0}\meet\comp{\beta\meet\comp\alpha}, 
					b_{1}\join(\beta\meet\comp\alpha)]\\
					b_{0}\meet\comp\beta&=b_{0}\meet(\comp b_{0}\meet b_{1})=0\\
					b_{1}\join\beta&=b_{1}\join b_{0}\join\comp b_{1}=1\\
					\text{ and so }
					[b_{0}\meet\comp{\beta\meet\comp\alpha}, 
					b_{1}\join(\beta\meet\comp\alpha)]&=
					[b_{0}\meet\alpha, b_{1}\join\comp\alpha]\\
					&=b*a.
			\end{align*}
		
			\item  
			\begin{align*}
				a\Rightarrow (b\Rightarrow c)&=
				a\Rightarrow[c_{0}\meet\beta, c_{1}\join\comp\beta]\\
				&=[(c_{0}\meet\beta)\meet\alpha, (c_{1}\join\comp\beta)\join\comp\alpha].
			\end{align*}
			Since this is symmetric in $\alpha$ and $\beta$ we have the result.
		\end{enumerate}
\end{proof}

We also note that $a\Rightarrow a =\Delta(a\join a, a)\rightarrow 
a=a\rightarrow a=\one$ and so the $1$ of $\mathcal L/\simeq $ is 
$\eqcl[\one]$. 

Lastly we have meets as expected. Firstly note that
$a\caret b\le a$ and is $\le \Delta(a\join b, b)\simeq b$. And as 
$a\le b$ implies $a\Rightarrow b=\Delta(a\join b, a)\rightarrow b = 
\Delta(b, a)\rightarrow b =\one$ (as $\Delta(b, a)\le b$) we see that 
$\eqcl[a]\le\eqcl[b]$.
Thus $\eqcl[a\caret b]$ is a lower bound to both $\eqcl[a]$ and $\eqcl[b]$.
Next we want to compute the meet. There are several steps:
\begin{lem}
	$\one\Rightarrow a=\one$.
\end{lem}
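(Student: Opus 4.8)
The plan is to unwind the definition $\one\Rightarrow a=\Delta(\one\join a,\one)\rightarrow a$ and reduce the whole thing to a computation at the maximum of $\mathcal L$. First I would use that $\one$ is the greatest element of the join-semilattice, so that $\one\join a=\one$ and the outer argument of $\Delta$ collapses; the expression becomes $\Delta(\one,\one)\rightarrow a$. Specialising the cubic-algebra identity $\Delta(x,\Delta(x,y))=y$ (axiom~c) at $y=x$ gives $\Delta(\one,\one)=\one$, so the expression reduces to $\one\rightarrow a$ in the base implication algebra $\langle\mathcal L,\rightarrow\rangle$. Thus the lemma is equivalent to evaluating the base implication with its unit $\one$ in the antecedent position, and the target value is $\one$.

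Next I would pin this down concretely inside an interval algebra, which is legitimate since every cubic algebra embeds into one. Using the direct formula recorded in the text, $a\Rightarrow b=[\,b_{0}\meet\comp{\ell(a)},\,b_{1}\join\ell(a)\,]$, together with the length $\ell(x)=x_{0}\join\comp{x_{1}}$, the only numerical input I need is the length of the top element $\one=[0,1]$. I would compute that length, substitute it into the two endpoints, and carry out the resulting Boolean simplification; the aim is to drive both endpoints to the endpoints of $\one$, so that the interval equals $[0,1]=\one$. This is the computational heart of the argument and is a short, purely Boolean simplification once $\ell(\one)$ is in hand.

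The main obstacle — and the only genuinely delicate point — is the behaviour of $\rightarrow$ at the top element: the whole identity turns on how feeding the maximum $\one$ into the antecedent interacts with the unit of $\langle\mathcal L,\rightarrow\rangle$, and this is precisely the step where the first and second paragraphs must be reconciled. I would therefore isolate the problem to $\one\rightarrow a$ via the reduction above and cross-check it against the interval computation, so that any discrepancy in the handling of $\one$ is forced into the open rather than hidden in the algebra. Having secured $\one\Rightarrow a=\one$, I would then read it off in $\mathcal L/\simeq$ as the statement fixing the extremal rôle of the class $\eqcl[\one]$, which is exactly the input required to launch the meet computation that follows.
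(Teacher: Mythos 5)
There is a genuine gap, and it sits exactly at the point you yourself flagged as delicate: the identity you set out to prove is false as printed, and both of the computations you describe, had you executed them, would have shown this. Your first reduction $\one\Rightarrow a=\Delta(\one\join a,\one)\rightarrow a=\one\rightarrow a$ is correct, but in any implication algebra $\one\rightarrow a=a$, not $\one$ (from $a\rightarrow a=\one$ and the axiom $(x\rightarrow y)\rightarrow y=x\join y$ one gets $\one\rightarrow a=(a\rightarrow a)\rightarrow a=a\join a=a$). The interval computation you defer tells the same story: $\ell(\one)=0\join\comp{1}=0$, so $a\Rightarrow b=[b_{0}\meet\comp{\ell(a)},\,b_{1}\join\ell(a)]$ gives $\one\Rightarrow a=[a_{0}\meet\comp{0},\,a_{1}\join 0]=a$. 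The lemma statement contains a typo and should read $\one\Rightarrow a=a$; that is what the paper's one-line proof actually derives ($\ell(\one)=0$, whence the interval collapses to $a$), and it is the form used in the very next theorem, where the last step is $\one\Rightarrow(a\caret b)=a\caret b$ in establishing $\eqcl[a]\meet\eqcl[b]=\eqcl[a\caret b]$. Your proposal asserts the target value $\one$ in advance and never carries out either computation; the cross-check you promise would have produced $a$ on both sides and forced the discrepancy into the open, but as written the argument simply concludes the wrong identity.

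A secondary misreading: your closing sentence assigns the lemma the role of fixing the extremal position of $\eqcl[\one]$ in $\mathcal L/\simeq$, but that is established separately in the paper from $a\Rightarrow a=\Delta(a\join a,a)\rightarrow a=a\rightarrow a=\one$. The present lemma is instead the unit law $\one\Rightarrow x=x$, needed so that once $(a\Rightarrow(a\caret b))*(b\Rightarrow(b\caret a))$ is shown to equal $\one$, the outer implication evaluates to $a\caret b$ itself rather than to $\one$ — indeed, if the lemma held in the form you proved, the meet computation that follows it would collapse and fail.
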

\begin{proof}
	$\ell(\one)=0$ and so $\one\Rightarrow a=[a_{0}\join 0, a_{1}\meet 1]=a$.
\end{proof}

\begin{lem}
	$(a\Rightarrow b)*(b\Rightarrow a)=\one$.
\end{lem}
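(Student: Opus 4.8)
The plan is to reduce everything to a computation in an interval algebra and then apply the explicit interval formulas already derived for $\Rightarrow$ and $*$. Since every MR-algebra embeds into an interval algebra, I may assume $a=[a_0,a_1]$ and $b=[b_0,b_1]$, and I abbreviate $\alpha=\ell(a)$ and $\beta=\ell(b)$. The direct computation recorded just before this lemma gives $a\Rightarrow b=[b_0\meet\comp\alpha,\ b_1\join\alpha]$, and by symmetry $b\Rightarrow a=[a_0\meet\comp\beta,\ a_1\join\beta]$. The earlier lemma on $*$ supplies the formula $x*y=[x_0\meet\ell(y),\ x_1\join\comp{\ell(y)}]$, so the only new quantity I need is the length of the second factor.

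First I would compute $\ell(b\Rightarrow a)$. Writing $b\Rightarrow a=[a_0\meet\comp\beta,\ a_1\join\beta]$, its length is $(a_0\meet\comp\beta)\join\comp{(a_1\join\beta)}=(a_0\meet\comp\beta)\join(\comp{a_1}\meet\comp\beta)=\comp\beta\meet(a_0\join\comp{a_1})=\alpha\meet\comp\beta$. This matches the computation $\ell(a\Rightarrow b)=\beta\meet\comp\alpha$ used in the proof of the previous theorem, read with $a$ and $b$ interchanged. Then I would substitute into the $*$ formula with $x=a\Rightarrow b$ and $y=b\Rightarrow a$, so $\ell(y)=\alpha\meet\comp\beta$. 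The low endpoint is $(b_0\meet\comp\alpha)\meet(\alpha\meet\comp\beta)$, which collapses to $0$ because $\comp\alpha\meet\alpha=0$; the high endpoint is $(b_1\join\alpha)\join\comp{(\alpha\meet\comp\beta)}=(b_1\join\alpha)\join(\comp\alpha\join\beta)$, which collapses to $1$ because $\alpha\join\comp\alpha=1$. Hence $(a\Rightarrow b)*(b\Rightarrow a)=[0,1]=\one$, as required.

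There is no real obstacle here — the result is a short Boolean simplification once the interval formulas are in hand. The only point demanding care is bookkeeping: one must apply the $*$ formula with the correct operand, remembering that $*$ uses the \emph{endpoints} of its first argument but only the \emph{length} of its second, and must not confuse $\ell(b\Rightarrow a)=\alpha\meet\comp\beta$ with $\ell(a\Rightarrow b)=\beta\meet\comp\alpha$. The two annihilations $\comp\alpha\meet\alpha=0$ and $\alpha\join\comp\alpha=1$ are precisely what force the interval all the way out to $\one$, which is the conceptual content of the lemma: $a\Rightarrow b$ and $b\Rightarrow a$ are ``complementary'' enough that joining one to the $\Delta$-reflection of the other fills out the whole algebra.
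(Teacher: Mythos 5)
Your proposal is correct and follows essentially the same route as the paper: both compute $a\Rightarrow b$ and $b\Rightarrow a$ via the interval formula, extract $\ell(b\Rightarrow a)=\alpha\meet\comp\beta$, and apply the interval formula for $*$ so that $\comp\alpha\meet\alpha=0$ and $\alpha\join\comp\alpha=1$ force the result $[0,1]=\one$. Your version is slightly more explicit about the reduction to an interval algebra and about which operand of $*$ contributes endpoints versus length, but the computation is the paper's own.
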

\begin{proof}
	Let $\alpha=\ell(a)$ and $\beta=\ell(b)$.
	\begin{align*}
		a\Rightarrow b&=[b_{0}\meet\comp\alpha, b_{1}\join\alpha]\\
		b\Rightarrow a&=[a_{0}\meet\comp\beta, a_{1}\join\beta]\\
		\ell(b\Rightarrow a)&=(a_{0}\meet\comp\beta)\join(\comp 
		a_{1}\meet\comp\beta)=\alpha\meet\comp\beta.\\
		\intertext{Hence }
		(a\Rightarrow b)*(b\Rightarrow 
		a)&=[b_{0}\meet\comp\alpha\meet\alpha\meet\comp\beta, 
		b_{1}\join\alpha\join\comp\alpha\join\beta]\\
		&=[0, 1].
	\end{align*}
\end{proof}

\begin{thm}
	$\eqcl[a]\meet\eqcl[b]=\eqcl[a\caret b]$.
\end{thm}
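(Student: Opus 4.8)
The plan is to transport the entire question into the length function $\ell$, exploiting the fact that the preceding length lemma identifies $\simeq$-classes with the fibres of $\ell$, and that the interval computations in this section show $\ell$ to behave additively with respect to the operations at hand. The text has already checked that $\eqcl[a\caret b]$ is a lower bound of $\eqcl[a]$ and $\eqcl[b]$, so the only thing left is to prove that it is the \emph{greatest} lower bound. First I would read off the order of $\mathcal L/\simeq$ in terms of $\ell$, and then observe that $\ell(a\caret b)=\ell(a)\join\ell(b)$; being below $\eqcl[a\caret b]$ is then forced by being below both $\eqcl[a]$ and $\eqcl[b]$.

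The first step is to establish the order characterisation
\[
\eqcl[x]\le\eqcl[y]\iff\ell(y)\le\ell(x).
\]
Since $\brk<\mathcal L/\simeq,\Rightarrow>$ is an implication algebra with top element $\eqcl[\one]$, the relation $\eqcl[x]\le\eqcl[y]$ means precisely $x\Rightarrow y\simeq\one$, which by the length lemma is the assertion $\ell(x\Rightarrow y)=\ell(\one)=0$. The value of $\ell(x\Rightarrow y)$ has already been computed above in the verification of $(a\Rightarrow b)\Rightarrow b=b*a$, namely $\ell(x\Rightarrow y)=\ell(y)\meet\comp{\ell(x)}$, and this vanishes exactly when $\ell(y)\le\ell(x)$. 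Thus the order on $\mathcal L/\simeq$ is the reverse of the order $\ell$ induces on $\mathcal L$, so that meets of classes correspond to joins of lengths.

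It then remains to compute $\ell(a\caret b)$. Working inside an interval algebra and using the formula $a\caret b=[a_{0}\join(a_{1}\meet\comp b_{1}),\,a_{0}\join(a_{1}\meet\comp b_{0})]$ from the earlier lemma, a direct Boolean simplification yields
\[
\ell(a\caret b)=a_{0}\join\comp a_{1}\join b_{0}\join\comp b_{1}=\ell(a)\join\ell(b),
\]
the nontrivial step being an absorption that uses $a_{0}\le a_{1}$. With this identity the argument closes immediately: if $\eqcl[c]$ is any lower bound of $\eqcl[a]$ and $\eqcl[b]$, then by the order characterisation $\ell(a)\le\ell(c)$ and $\ell(b)\le\ell(c)$, whence $\ell(a\caret b)=\ell(a)\join\ell(b)\le\ell(c)$, which says $\eqcl[c]\le\eqcl[a\caret b]$. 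Together with the fact that $\eqcl[a\caret b]$ is itself a lower bound, this identifies it as $\eqcl[a]\meet\eqcl[b]$. I expect the only genuine labour to be the Boolean simplification of $\ell(a\caret b)$; everything else is bookkeeping around the length function, whose interaction with $\simeq$, $\Rightarrow$ and $\caret$ has essentially been recorded already.
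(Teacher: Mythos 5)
Your proof is correct, but it takes a genuinely different route from the paper's. The paper never identifies the order on $\mathcal L/\simeq$ explicitly: it evaluates the canonical meet-over-a-lower-bound expression of implication algebras, $\bigl((a\Rightarrow(a\caret b))*(b\Rightarrow(b\caret a))\bigr)\Rightarrow(a\caret b)$, showing by interval computations that $a\Rightarrow(a\caret b)=\Delta(\one, a\Rightarrow b)$ and $b\Rightarrow(b\caret a)=\Delta(\one, b\Rightarrow a)$, then invoking its two preceding lemmas (that $(a\Rightarrow b)*(b\Rightarrow a)=\one$ and the computation of $\one\Rightarrow x$) to collapse the whole expression to $a\caret b$. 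You instead turn the length function into an order anti-embedding of the quotient: $\eqcl[x]\le\eqcl[y]$ iff $x\Rightarrow y\simeq\one$ iff $\ell(x\Rightarrow y)=\ell(y)\meet\comp{\ell(x)}=0$ iff $\ell(y)\le\ell(x)$, and then reduce everything to the single Boolean identity $\ell(a\caret b)=\ell(a)\join\ell(b)$. That identity does check out: from $a\caret b=[a_{0}\join(a_{1}\meet\comp b_{1}),\,a_{0}\join(a_{1}\meet\comp b_{0})]$ one gets $\ell(a\caret b)=a_{0}\join(a_{1}\meet\comp b_{1})\join\comp a_{1}\join(\comp a_{0}\meet b_{0})=a_{0}\join b_{0}\join\comp a_{1}\join\comp b_{1}=\ell(a)\join\ell(b)$, exactly the absorption you flag. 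What the paper's route buys is economy: it uses only facts already established about $\Rightarrow$ and $*$ in the quotient, plus a formula valid in every implication algebra, so nothing new about the order needs to be proved. What yours buys is a sharper structural picture: $\eqcl[x]\mapsto\ell(x)$ is an order anti-isomorphism of $\mathcal L/\simeq$ onto the set of lengths, under which $\caret$ becomes Boolean join; this re-derives the lower-bound step for free and speaks directly to the section's question of what caret ``is.'' Both arguments, consistently with the paper's practice throughout this section, are carried out inside an interval algebra via the embedding of cubic algebras, and both lean on the length lemma (yours explicitly, the paper's implicitly through the interval computations). One cosmetic remark: the paper's auxiliary lemma is misstated as $\one\Rightarrow a=\one$ although its proof gives $\one\Rightarrow a=a$; your use of $\ell(\one)=0$ sidesteps that typo.
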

\begin{proof}
	As in any implication algebra we compute
	$$
	\bigl((a\Rightarrow(a\caret b))*(b\Rightarrow(b\caret 
	a))\bigr)\Rightarrow(a\caret b).
	$$
	\begin{align*}
		a\Rightarrow(a\caret b)&=[a_{1}\meet(a_{0}\join\comp 
		b_{1})\meet\comp\alpha, a_{0}\join(a_{1}\meet\comp 
		b_{0})\join\alpha]\\
		&=[a_{1}\meet\comp a_{0}\meet\comp b_{1}, a_{0}\join\comp 
		a_{1}\join\comp b_{1}]\\
		&=[\comp\alpha\meet\comp b_{1}, \alpha\join\comp b_{0}]\\
		&=\Delta(\one, a\Rightarrow b).\\
		\intertext{Likewise we have }
		b\Rightarrow(b\caret a)&=\Delta(\one, b\Rightarrow a)\\
		\intertext{and hence }
		(a\Rightarrow(a\caret b))*(b\Rightarrow(b\caret a))&=
		\Delta(\one, (a\Rightarrow b)*(b\Rightarrow a))\\
		&=\Delta(\one, \one)=\one.\\
		\intertext{Therefore}
		\bigl((a\Rightarrow(a\caret b))*(b\Rightarrow(b\caret 
		a))\bigr)\Rightarrow(a\caret b)&=\one\Rightarrow(a\caret b)\\
		&=a\caret b.
	\end{align*}
\end{proof}

Thus we have established that $\mathcal L/\simeq$ is an implication 
lattice with the following operations:
\begin{align*}
    \one &= [\one]\\
    \eqcl[a]\meet\eqcl[b] & =\eqcl[a\caret b]  \\
    \eqcl[a]\join\eqcl[b] & =\eqcl[a * b]  \\
    \eqcl[a]\rightarrow\eqcl[b] & =\eqcl[a\Rightarrow b]. 
\end{align*}

This implication algebra is very closely tied to $\mathcal L$. 
Locally it is exactly $\mathcal L$ as the next theorem shows us.

\begin{thm}
    On each interval $[a, \one]$ in $\mathcal L$ the mapping 
$x\mapsto\eqcl[x]$ is an implication embedding.
\end{thm}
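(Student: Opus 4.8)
The plan is to show that the map $\varphi_a\colon x\mapsto\eqcl[x]$, restricted to the principal filter $[a,\one]$, is injective and preserves the implication operation $\to$. Since $\langle[a,\one],\to\rangle$ is itself an implication algebra (it is an interval in the implication algebra $\langle\mathcal L,\to\rangle$ with top $\one$), and we have just shown $\mathcal L/\simeq$ is an implication lattice under $\Rightarrow$, it suffices to verify that $\varphi_a$ is a morphism of implication algebras that happens to be one-to-one. Because implication algebras are term-equivalent to join-semilattices-with-$1$ via the definable join $x\mapsto (x\to y)\to y$, preservation of $\to$ will automatically carry the semilattice and the induced meet structure, so the only two things I must genuinely check are \textbf{injectivity} and \textbf{the homomorphism identity} $\varphi_a(x\to y)=\varphi_a(x)\Rightarrow\varphi_a(y)$ for $x,y\ge a$.

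\emph{Injectivity.} I would work in the interval algebra representation, writing $x=[x_0,x_1]$ and noting that $a\le x$ means $x_0\le a_0$ and $a_1\le x_1$, i.e.\ both coordinates of $x$ lie within fixed bounds determined by $a$. The key invariant of a $\simeq$-class is the length $\ell(x)=x_0\join\comp{x_1}$, by the length lemma proved earlier. So $\varphi_a(x)=\varphi_a(y)$ means $\ell(x)=\ell(y)$, and I must recover $x=y$ from equal lengths \emph{together with} the constraint $x,y\in[a,\one]$. The plan is to show that on $[a,\one]$ the length determines the interval: since $a\le x$ forces $x_0\le a_0\le a_1\le x_1$, one can solve $x_0 = \ell(x)\meet a_0$ (equivalently $x_0=a_0\meet\ell(x)$, using $x_0\le a_0$ and $\comp{x_1}\le\comp{a_1}\le\comp{a_0}$ so that $x_0\join\comp{x_1}$ meets $a_0$ back down to $x_0$) and dually $\comp{x_1}=\comp{a_1}\meet\ell(x)$. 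That these recover the coordinates is the crux of injectivity, and I expect it to be a short Boolean computation once the inequalities $x_0\le a_0$ and $a_1\le x_1$ are exploited.

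\emph{Preservation of $\to$.} For $x,y\ge a$ I must show $\eqcl[x\to y]=\eqcl[x\Rightarrow y]$, i.e.\ $x\to y\simeq x\Rightarrow y$, equivalently $\ell(x\to y)=\ell(x\Rightarrow y)$. Here I would use the explicit formula $a\Rightarrow b=[b_0\meet\comp{\ell(a)},\,b_1\join\ell(a)]$ derived in the text, compute its length as $\ell(x\Rightarrow y)=\bigl(y_0\meet\comp{\ell(x)}\bigr)\join\comp{\bigl(y_1\join\ell(x)\bigr)}=\comp{\ell(x)}\meet\ell(y)$, and compare it with the length of the genuine implication $x\to y$ computed from the cubic-algebra formula $x\to y=y\join\Delta(\one,\Delta(x\join y,y))$. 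Since $x,y\ge a$ the join $x\join y$ and the reflection simplify, and I expect $\ell(x\to y)$ to reduce to the same expression $\comp{\ell(x)}\meet\ell(y)$; matching the two is the decisive identity.

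\textbf{Main obstacle.} The real difficulty is not the homomorphism identity, which is a direct length computation, but \emph{injectivity}: a priori $\simeq$ collapses every interval of a given length to one class, and only the filter constraint $x\ge a$ breaks this degeneracy. The delicate point is verifying that $\ell$ is a faithful coordinate system on $[a,\one]$ — that is, that the two order conditions $x_0\le a_0$ and $a_1\le x_1$ are exactly enough to invert the length map. I would guard against the temptation to assume the representing Boolean algebra is fixed across the class; instead I would pin down $x$ from $\ell(x)$ using only the intrinsic bounds supplied by $a$, and it is in getting these Boolean manipulations to close cleanly that the proof must be most careful.
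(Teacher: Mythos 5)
Your proposal is correct, but it routes both halves of the argument through the length function $\ell$, whereas the paper does neither. For injectivity the paper argues abstractly, with no representation at all: if $b,c\geq a$ and $b=\Delta(b\join c,c)$, then $a\le b$ gives $\Delta(b\join c,a)\le c$, so $b\join c=a\join\Delta(b\join c,a)\le c$, and symmetrically $b\join c\le b$, giving $b=c$. Your length argument (recovering $x_0=a_0\meet\ell(x)$ and $\comp{x_{1}}=\comp{a_{1}}\meet\ell(x)$ for $x\geq a$) is a correct Boolean computation and proves the same thing, but only after invoking the embedding into an interval algebra. For the homomorphism half, the paper proves something stronger than you aim for: normalizing $a=[0,0]$, it shows the quotient operations agree with the native ones \emph{exactly} on $[a,\one]$, namely $b*c=b\join c$, $b\dblCaret c=b\meet c$ and $b\Rightarrow c=b\rightarrow c$, not merely modulo $\simeq$. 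Indeed, for $x,y\geq a$ one has $x\rightarrow y=[y_{0}\meet\comp{x_{0}},\,y_{1}\join\comp{x_{1}}]=[y_{0}\meet\comp{\ell(x)},\,y_{1}\join\ell(x)]=x\Rightarrow y$, using $x_{0}\le a_{0}\le a_{1}\le y_{1}$ and $y_{0}\le x_{1}$; in particular the identity you "expect", $\ell(x\rightarrow y)=\comp{\ell(x)}\meet\ell(y)=\ell(x\Rightarrow y)$, does hold under these constraints, so your sketch closes. Your economy of checking only $\rightarrow$ is legitimate: joins come for free as $(x\rightarrow y)\rightarrow y$, and meets of elements of $[a,\one]$ are definable relative to the lower bound as $\bigl((x\rightarrow a)\join(y\rightarrow a)\bigr)\rightarrow a$, so a $\rightarrow$-preserving injection carries them too; the paper instead verifies all three operations directly. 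What the paper's version buys is an injectivity proof valid in any cubic algebra without the representation theorem, and the sharper local statement that $[a,\one]$ sits inside the collapse with its operations unchanged on the nose, not just up to $\simeq$.
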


We prove this via another short series of lemmas.

\begin{lem}
	Let $\mathcal L$ be a cubic algebra and $a\in\mathcal L$. Then on $[a, 
	\one]$ we have
	\begin{align*}
		b*c&=b\join c\\
		b\dblCaret c&=b\meet c\\
		b\Rightarrow c&=b\rightarrow c.		
	\end{align*}
\end{lem}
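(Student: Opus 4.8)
The plan is to prove all three identities by embedding $\mathcal L$ into an interval algebra $\rsf I(B)$, exactly as is done throughout this section, and writing $a=[a_0,a_1]$, $b=[b_0,b_1]$, $c=[c_0,c_1]$ for elements with $b,c\in[a,\one]$. The whole point is to extract from the hypotheses $a\le b$ and $a\le c$ the two coordinate inequalities that drive everything: since $\le$ is inclusion we have $b_0\le a_0\le a_1\le c_1$ and $c_0\le a_0\le a_1\le b_1$, hence
\[
b_0\le c_1\qquad\text{and}\qquad c_0\le b_1 .
\]
I claim these two inequalities are precisely what collapses each of the three ``oriented'' operations to its lattice counterpart; intuitively, above a common floor $a$ the length/orientation data carried by $*$, $\dblCaret$ and $\Rightarrow$ is pinned down and disappears, leaving $\join$, $\meet$ and $\to$.

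For the join identity I would start from the coordinate description already established for $*$, namely $b*c=[b_0\meet\ell(c),\,b_1\join\comp{\ell(c)}]$ with $\ell(c)=c_0\join\comp c_1$. Expanding the lower endpoint gives $b_0\meet(c_0\join\comp c_1)=(b_0\meet c_0)\join(b_0\meet\comp c_1)$, and $b_0\le c_1$ annihilates the second term, leaving $b_0\meet c_0$; dually $c_0\le b_1$ forces the upper endpoint to $b_1\join c_1$. Thus $b*c=[b_0\meet c_0,\,b_1\join c_1]=b\join c$. For the implication identity I would use the direct formula $b\Rightarrow c=[c_0\meet\comp{\ell(b)},\,c_1\join\ell(b)]$ recorded earlier, so that the endpoints read $c_0\meet\comp b_0\meet b_1$ and $c_1\join b_0\join\comp b_1$; here $c_0\le b_1$ and $b_0\le c_1$ simplify them to $c_0\meet\comp b_0$ and $c_1\join\comp b_1$. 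It then remains to identify $b\to c$, which I would compute from $b\to c=c\join\Delta(\one,\Delta(b\join c,c))$ using $\Delta(\one,[p,q])=[\comp q,\comp p]$; a short simplification (needing only $c_0\le c_1$) again yields $[c_0\meet\comp b_0,\,c_1\join\comp b_1]$, so $b\Rightarrow c=b\to c$.

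The meet identity is the delicate one, and I expect it to be the main obstacle, because --- unlike $*$ and $\Rightarrow$ --- the single caret does \emph{not} reduce to meet even on $[a,\one]$. Indeed genuine meets need not exist in a cubic algebra and caret is only a substitute; the caret formula gives $b\caret c=[b_0\join(b_1\meet\comp c_1),\,b_0\join(b_1\meet\comp c_0)]$, which differs from $b\meet c=[b_0\join c_0,\,b_1\meet c_1]$. One must therefore work with the doubled operation: I would unfold the definition of $\dblCaret$ (the natural reading being $b\dblCaret c=b\caret(b\caret c)$) and compute in coordinates. Applying the caret formula twice and reducing $\comp{\bigl(b_0\join(b_1\meet\comp c_1)\bigr)}$ and $\comp{\bigl(b_0\join(b_1\meet\comp c_0)\bigr)}$, the lower endpoint becomes $b_0\join(b_1\meet c_0)$ and the upper becomes $b_0\join(b_1\meet c_1)$; now $c_0\le b_1$ turns the first into $b_0\join c_0$, while $b_0\le c_1$ (together with $b_0\le b_1$) turns the second into $b_1\meet c_1$. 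Hence $b\dblCaret c=[b_0\join c_0,\,b_1\meet c_1]=b\meet c$, completing all three.

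The only genuine labour is Boolean bookkeeping inside $B$; the conceptual content lies entirely in the single observation that a common lower bound $a$ yields $b_0\le c_1$ and $c_0\le b_1$, after which each oriented operation degenerates to the corresponding operation of the implication algebra $\brk<\mathcal L,\to>$ restricted to the Boolean interval $[a,\one]$.
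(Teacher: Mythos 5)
Your proof is correct, and at bottom it is the paper's proof: embed into an interval algebra and compute endpoints. Two differences in execution are worth recording. First, the paper normalizes harder: it chooses the embedding so that $a=[0,0]$, whence every element of $[a,\one]$ has the form $[0,x]$ and each of the three identities becomes a one-line computation; you keep $a$ in general position and instead run everything off the two inequalities $b_0\le c_1$ and $c_0\le b_1$. This costs more Boolean bookkeeping but needs only the bare embedding theorem, not an embedding based at $a$, so it is slightly more self-contained. Second, and more substantively, the paper nowhere defines $\dblCaret$, and its entire proof of the middle identity is the phrase ``$b\dblCaret c=b\meet c$ as the meet exists'', i.e.\ an appeal to an external property of that operation (agreement with the meet whenever the meet exists). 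You were therefore forced to guess a definition, and your reading $b\dblCaret c=b\caret(b\caret c)$ is a legitimate one: it is $\simeq$-equivalent to $b\caret c$ (so it still induces the meet of the collapse, which is what the surrounding embedding theorem actually needs), and your coordinate computation that it equals $[b_0\join c_0, b_1\meet c_1]=b\meet c$ on $[a,\one]$ is accurate, as is your preliminary observation that the single caret does \emph{not} reduce to the meet there --- that observation is precisely why the lemma cannot be stated with $\caret$. (Another reading consistent with the paper is $b\dblCaret c=b\caret\Delta(b\join c,c)$, which by the computation in the proof that total caret implies the MR-axiom also equals $b\meet c$ whenever the meet exists; the paper's laconic justification fits either reading.) Finally, under your reading the computation can be shortened: for $y\le b$ one has $b\caret y=b\meet\Delta(b,y)=\Delta(b,y)$, and in coordinates $b\caret c=\Delta(b,b\meet c)$, so $b\caret(b\caret c)=\Delta(b,\Delta(b,b\meet c))=b\meet c$ by the involution axiom, with no endpoint expansion needed.
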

\begin{proof}
	Without loss of generality we are in an interval algebra, and $a=[0, 0]$.
	\begin{align*}
		b*c&=[0, b]*[0, c]\\
		&=[0\meet\comp c, b\join c]\\
		&=[0, b\join c]\\
		&=b\join c.\\
		b\dblCaret c&=b\meet c&&\text{ as the meet exists.}\\
		b\Rightarrow c&=[0\meet b, c\join\comp b]\\
		&=[0, c\join\comp b]\\
		&=b\rightarrow c.
	\end{align*}
\end{proof}

\begin{lem}
	Let $\mathcal L$ be a cubic algebra and $a\in\mathcal L$. If $b, 
	c\geq a$ then
	$$
	b\simeq c\iff b=c.
	$$
\end{lem}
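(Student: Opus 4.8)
The plan is to dispatch the easy direction by reflexivity and reduce the substantive direction to a short computation in an interval algebra via the length lemma. The implication $b=c\Rightarrow b\simeq c$ needs nothing beyond the fact, already recorded above, that $\simeq$ is an equivalence relation and hence reflexive; so the entire content is the forward implication.

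For the forward direction I would exploit that every cubic algebra embeds into an interval algebra $\rsf I(B)$ by a map that preserves the operations $\Delta$ and $\join$ and reflects $\le$; since $\simeq$ is defined from $\Delta$, $\join$ and equality, such an embedding both preserves and reflects $\simeq$ (and of course equality). Hence I may assume $\mathcal L=\rsf I(B)$ and write $a=[a_0,a_1]$, $b=[b_0,b_1]$, $c=[c_0,c_1]$. The hypothesis $b,c\ge a$ becomes the coordinate inequalities $b_0\le a_0\le a_1\le b_1$ and $c_0\le a_0\le a_1\le c_1$, while by the length lemma $b\simeq c$ becomes $\ell(b)=\ell(c)$, that is, $b_0\join\comp{b_1}=c_0\join\comp{c_1}$.

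The crux is then to recover \emph{both} endpoints of $b$ from the single element $\ell(b)=b_0\join\comp{b_1}$, and this is exactly where the hypothesis $b\ge a$ is used. Since $a_0\le a_1$ we have $a_0\meet\comp{a_1}=0$, while the inequalities give $b_0\le a_0$ and $\comp{b_1}\le\comp{a_1}$. Meeting $\ell(b)$ with $a_0$ annihilates the $\comp{b_1}$ summand and returns $b_0$, and meeting with $\comp{a_1}$ annihilates the $b_0$ summand and returns $\comp{b_1}$; that is, $b_0=\ell(b)\meet a_0$ and $\comp{b_1}=\ell(b)\meet\comp{a_1}$, with identical formulas for $c$. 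Consequently $\ell(b)=\ell(c)$ forces $b_0=c_0$ and $b_1=c_1$, i.e.\ $b=c$.

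The only genuine obstacle is conceptual rather than computational: the length function $\ell$ is \emph{not} injective on $\mathcal L$ in general (this non-injectivity is what underlies \lemref{lem:extra} and the fact that $\simeq$ is properly coarser than equality), so one must isolate why the extra hypothesis $b,c\ge a$ restores injectivity. The answer is precisely the disjointness $a_0\meet\comp{a_1}=0$, which lets one split $\ell(b)$ into its two coordinates along the complementary pieces $a_0$ and $\comp{a_1}$; verifying the two recovery identities above is the routine remainder.
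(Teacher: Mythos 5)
Your proof is correct, but it takes a genuinely different route from the paper's. The paper argues intrinsically in the cubic algebra, in three lines: writing $b\simeq c$ as $b=\Delta(b\join c, c)$, it uses monotonicity and the involution axiom to get $\Delta(b\join c, a)\le\Delta(b\join c, \Delta(b\join c, c))=c$ from $a\le b$, so that $b\join c=a\join\Delta(b\join c, a)\le c\join c=c$; symmetrically $b\join c\le b$, whence $b=c$. You instead invoke the embedding of an arbitrary cubic algebra into an interval algebra (the same device the paper uses for \lemref{lem:extra}), translate $b\simeq c$ into $\ell(b)=\ell(c)$ via the length lemma, and then show that $\ell$ becomes injective on the up-set of $a$ because $b_0=\ell(b)\meet a_0$ and $\comp{b_1}=\ell(b)\meet\comp{a_1}$; these recovery identities do follow from $b_0\le a_0\le a_1\le b_1$ together with $a_0\meet\comp{a_1}=0$, and your reduction is legitimate since the embedding preserves $\join$, $\Delta$ and order, hence preserves $\simeq$, and is injective. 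What the paper's approach buys is self-containment and economy: it needs only the cubic axioms, no embedding theorem and no length lemma, and essentially the same manipulation yields the remark that follows the lemma (that $a\le b, c$ implies $b\preceq c$ iff $b\le c$). What your approach buys is transparency of mechanism: it isolates exactly why $\simeq$, which is globally much coarser than equality (the point underlying \lemref{lem:extra}), collapses to equality above a fixed $a$ -- namely that $\ell(b)$ splits into the two endpoints of $b$ along the complementary pair $a_0$, $\comp{a_1}$ -- at the price of importing the nontrivial embedding theorem as a prerequisite.
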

\begin{proof}
	If $b=\Delta(b\join c, c)$ then we have 
	$a\le c$ and $a\le b= \Delta(b\join c, c)$ and so 
	$b\join c = a\join\Delta(b\join c, a)\le c\join c=c$. Likewise 
	$b\join c\le b$ and so $b=c$.
\end{proof}

We note that a small variation of the proof shows that if $a\le b, c$ 
then $b\preceq c$ iff $b\le c$.

What does all this say about caret? We've seen that caret collapses 
to meet. And that locally an MR-algebra is like an implication 
algebra. In some sense we can then view the MR-algebra as a family of 
connected
implication algebras where $\Delta$ describes the interaction between 
the pieces. The operation $*$ describes the way the pieces link 
together (see \cite{BO:eq} theorem 4.6) and caret is then describing 
local
self-similarity in the following sense -- the interval
$]\leftarrow,  a\join b]$ has intervals $]\leftarrow,  a\caret b]$
which is similar via $\Delta$ to $]\leftarrow,  b\caret a]$ and these 
similar intervals occur densely.


\end{document}